\newdimen\proofrulebreadth \proofrulebreadth=.05em
\newdimen\proofdotseparation \proofdotseparation=1.25ex
\newdimen\proofrulebaseline \proofrulebaseline=2ex
\let\then\relax
\def\hfi{\hskip0pt plus.0001fil}
\mathchardef\squigto="3A3B
\newif\ifinsideprooftree\insideprooftreefalse
\newif\ifonleftofproofrule\onleftofproofrulefalse
\newif\ifproofdots\proofdotsfalse
\newif\ifdoubleproof\doubleprooffalse
\let\wereinproofbit\relax
\newdimen\shortenproofleft
\newdimen\shortenproofright
\newdimen\proofbelowshift
\newbox\proofabove
\newbox\proofbelow
\newbox\proofrulename
\def\shiftproofbelow{\let\next\relax\afterassignment\setshiftproofbelow\dimen0 }
\def\shiftproofbelowneg{\def\next{\multiply\dimen0 by-1 }%
\afterassignment\setshiftproofbelow\dimen0 }
\def\setshiftproofbelow{\next\proofbelowshift=\dimen0 }
\def\setproofrulebreadth{\proofrulebreadth}
\def\prooftree{
%
\ifnum  \lastpenalty=1
\then   \unpenalty
\else   \onleftofproofrulefalse
\fi
%
\ifonleftofproofrule
\else   \ifinsideprooftree
        \then   \hskip.5em plus1fil
        \fi
\fi
%
\bgroup
\setbox\proofbelow=\hbox{}\setbox\proofrulename=\hbox{}%
\let\justifies\proofover\let\leadsto\proofoverdots\let\Justifies\proofoverdbl
\let\using\proofusing\let\[\prooftree
\ifinsideprooftree\let\]\endprooftree\fi
\proofdotsfalse\doubleprooffalse
\let\thickness\setproofrulebreadth
\let\shiftright\shiftproofbelow \let\shift\shiftproofbelow
\let\shiftleft\shiftproofbelowneg
\let\ifwasinsideprooftree\ifinsideprooftree
\insideprooftreetrue
%
\setbox\proofabove=\hbox\bgroup$\displaystyle 
\let\wereinproofbit\prooftree
%
\shortenproofleft=0pt \shortenproofright=0pt \proofbelowshift=0pt
%
\onleftofproofruletrue\penalty1
}
\def\eproofbit{
%
\ifx    \wereinproofbit\prooftree
\then   \ifcase \lastpenalty
        \then   \shortenproofright=0pt  
        \or     \unpenalty\hfil         
        \or     \unpenalty\unskip       
        \else   \shortenproofright=0pt  
        \fi
\fi
%
\global\dimen0=\shortenproofleft
\global\dimen1=\shortenproofright
\global\dimen2=\proofrulebreadth
\global\dimen3=\proofbelowshift
\global\dimen4=\proofdotseparation
\global\count255=\proofdotnumber
%
$\egroup  
%
\shortenproofleft=\dimen0
\shortenproofright=\dimen1
\proofrulebreadth=\dimen2
\proofbelowshift=\dimen3
\proofdotseparation=\dimen4
\proofdotnumber=\count255
}
\def\proofover{
\eproofbit 
\setbox\proofbelow=\hbox\bgroup 
\let\wereinproofbit\proofover
$\displaystyle
}%
\def\proofoverdbl{
\eproofbit 
\doubleprooftrue
\setbox\proofbelow=\hbox\bgroup 
\let\wereinproofbit\proofoverdbl
$\displaystyle
}%
\def\proofoverdots{
\eproofbit 
\proofdotstrue
\setbox\proofbelow=\hbox\bgroup 
\let\wereinproofbit\proofoverdots
$\displaystyle
}%
\def\proofusing{
\eproofbit 
\setbox\proofrulename=\hbox\bgroup 
\let\wereinproofbit\proofusing
\kern0.3em$
}
\def\endprooftree{
\eproofbit 
  \dimen5 =0pt
%
\dimen0=\wd\proofabove \advance\dimen0-\shortenproofleft
\advance\dimen0-\shortenproofright
%
\dimen1=.5\dimen0 \advance\dimen1-.5\wd\proofbelow
\dimen4=\dimen1
\advance\dimen1\proofbelowshift \advance\dimen4-\proofbelowshift
%
\ifdim  \dimen1<0pt
\then   \advance\shortenproofleft\dimen1
        \advance\dimen0-\dimen1
        \dimen1=0pt
        \ifdim  \shortenproofleft<0pt
        \then   \setbox\proofabove=\hbox{%
                        \kern-\shortenproofleft\unhbox\proofabove}%
                \shortenproofleft=0pt
        \fi
\fi
%
\ifdim  \dimen4<0pt
\then   \advance\shortenproofright\dimen4
        \advance\dimen0-\dimen4
        \dimen4=0pt
\fi
%
\ifdim  \shortenproofright<\wd\proofrulename
\then   \shortenproofright=\wd\proofrulename
\fi
%
\dimen2=\shortenproofleft \advance\dimen2 by\dimen1
\dimen3=\shortenproofright\advance\dimen3 by\dimen4
%
\ifproofdots
\then
        \dimen6=\shortenproofleft \advance\dimen6 .5\dimen0
        \setbox1=\vbox to\proofdotseparation{\vss\hbox{$\cdot$}\vss}%
        \setbox0=\hbox{%
                \advance\dimen6-.5\wd1
                \kern\dimen6
                $\vcenter to\proofdotnumber\proofdotseparation
                        {\leaders\box1\vfill}$%
                \unhbox\proofrulename}%
\else   \dimen6=\fontdimen22\the\textfont2 
        \dimen7=\dimen6
        \advance\dimen6by.5\proofrulebreadth
        \advance\dimen7by-.5\proofrulebreadth
        \setbox0=\hbox{%
                \kern\shortenproofleft
                \ifdoubleproof
                \then   \hbox to\dimen0{%
                        $\mathsurround0pt\mathord=\mkern-6mu%
                        \cleaders\hbox{$\mkern-2mu=\mkern-2mu$}\hfill
                        \mkern-6mu\mathord=$}%
                \else   \vrule height\dimen6 depth-\dimen7 width\dimen0
                \fi
                \unhbox\proofrulename}%
        \ht0=\dimen6 \dp0=-\dimen7
\fi
%
\let\doll\relax
\ifwasinsideprooftree
\then   \let\VBOX\vbox
\else   \ifmmode\else$\let\doll=$\fi
        \let\VBOX\vcenter
\fi
\VBOX   {\baselineskip\proofrulebaseline \lineskip.2ex
        \expandafter\lineskiplimit\ifproofdots0ex\else-0.6ex\fi
        \hbox   spread\dimen5   {\hfi\unhbox\proofabove\hfi}%
        \hbox{\box0}%
        \hbox   {\kern\dimen2 \box\proofbelow}}\doll%
%
\global\dimen2=\dimen2
\global\dimen3=\dimen3
\egroup 
\ifonleftofproofrule
\then   \shortenproofleft=\dimen2
\fi
\shortenproofright=\dimen3
%
\onleftofproofrulefalse
\ifinsideprooftree
\then   \hskip.5em plus 1fil \penalty2
\fi
}
\newcommand{\C}{\ensuremath{\mathbb{C}}}
\newcommand{\Cc}{\ensuremath{\mathbb{C}_\times}}
\newcommand{\B}{\ensuremath{\mathbb{B}}}
\newcommand{\N}{\ensuremath{\mathbb{N}}}
\newcommand{\T}{\ensuremath{\mathbb{T}}}
\newcommand{\psh}[1]{\ensuremath{\mathsf{Set}^{#1^{\mathrm{op}}}}}
\newcommand{\op}[1]{\ensuremath{#1^{\mathrm{op}}}}
\newcommand{\Set}{\ensuremath{\mathsf{Set}}}
\newcommand{\cSet}{\ensuremath{\mathsf{cSet}}}
\newcommand{\pocorner}[1][dr]{\save*!/#1+2pc/#1:(2,-2)@^{|-}\restore}
\newcommand{\pbcorner}[1][dr]{\save*!/#1-1.2pc/#1:(-1,1)@^{|-}\restore}
\newcommand{\y}{\ensuremath{\mathsf{y}}} 
\newcommand{\Hom}{\ensuremath{\mathrm{Hom}}}
\renewcommand{\hom}{\ensuremath{\mathrm{Hom}}}
\renewcommand{\L}{\ensuremath{\mathcal{L}}}
\newcommand{\R}{\ensuremath{\mathcal{R}}}
\newcommand{\LL}{\ensuremath{\mathsf{L}}}
\newcommand{\RR}{\ensuremath{\mathsf{R}}}
\newcommand{\eval}{\ensuremath{\mathrm{eval}}}
\newcommand{\hook}{\ensuremath{\hookrightarrow}}
\newcommand{\mono}{\ensuremath{\rightarrowtail}}
\newcommand{\arr}{\ensuremath{\rightarrow}}
\newcommand{\Tinf}{\ensuremath{T_\infty}}
\newcommand{\I}{\ensuremath{\mathrm{I}}}
\newcommand{\pA}{\ensuremath{A^\I}}
\newcommand{\G}{\ensuremath{\Gamma}}
\newcommand{\type}{\mathsf{type}}       
\newcommand{\types}[2]{#1 \vdash #2\ \type}
\newcommand{\Gtypes}[1]{\types{\Gamma}{#1}}
\newcommand{\terms}[2]{#1 \vdash #2}
\newcommand{\Id}{\ensuremath{\mathsf{Id}}}
\newcommand{\id}[1]{\Id_{#1}}
\newcommand{\refl}{\mathsf{refl}}
\newtheorem{theorem}{Theorem}
\newtheorem*{theorem*}{Theorem}
\newtheorem{proposition}[theorem]{Proposition} 
\newtheorem{lemma}[theorem]{Lemma}
\newtheorem{corollary}[theorem]{Corollary} 
\theoremstyle{remark}
\newtheorem{remark}[theorem]{Remark} 
\newtheorem*{remarks*}{Remarks}
\theoremstyle{definition}
\newtheorem{definition}[theorem]{Definition}
\begin{document}

\title{A cubical model of homotopy type theory\thanks{
Notes from a series of lectures given in May-June, 2016, in the Stockholm Logic Seminar and the Topological Actitivies Seminar.  Thanks to Erik Palmgren for providing the opportunity and to the members of the Stockholm Logic Group for a very enjoyable stay.}
}
\author{Steve Awodey
}
\date{Stockholm, 21 June 2016}

\maketitle
\noindent
The main goal of these notes is to prove the following:

\begin{theorem*}
There is an algebraic weak factorization system $(\mathsf{L}, \mathsf{R})$ on the category of cartesian cubical sets such that for any $\mathsf{R}$-object $A$, the factorization of the diagonal map,
\[
A \to A^\I \to A\times  A\,,
\]
determined by the 1-cube $\I$, is an $(\mathsf{L}, \mathsf{R})$-factorization.
\end{theorem*}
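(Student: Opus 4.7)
The plan is to construct the AWFS $(\LL, \RR)$ via Garner's algebraic small object argument, applied to a generating set $J$ in $\cSet$ consisting of the boundary inclusions $\partial\square^n \hookrightarrow \square^n$ of the representable cubes together with tube inclusions determined by the interval $\I$ that encode the Kan filling conditions. Garner's construction yields a cocontinuous functorial factorization $f = R(f)\circ L(f)$ with a comonad structure on $L$ and a monad structure on $R$, and hence an AWFS whose $\LL$-maps are the $L$-coalgebras and whose $\RR$-maps are the $R$-algebras. Unpacking what it means for $A \to 1$ to be an $\RR$-map identifies the $\RR$-objects as cubical sets equipped with a uniform Kan filling structure.

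Given such an $\RR$-object $A$, I would next show that $A^\I$ is again an $\RR$-object, relying on the fact that $\I$ is exponentiable (indeed tiny) in $\cSet$ and on the closure of $\RR$-maps under exponentiation by such objects. The endpoint-evaluation $A^\I \to A \times A$ is then an $\RR$-map by a Leibniz-exponentiation argument: it arises from exponentiating the generators of $J$ against $\partial\I \hookrightarrow \I$, and closure of $\RR$-maps under this pullback-exponential operation follows directly from the chosen generators.

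To equip the constant-paths inclusion $A \to A^\I$ with an $\LL$-coalgebra structure, I would exploit the contraction $r \colon \I \to 1$ together with an endpoint $1 \to \I$, which exhibit $A \to A^\I$ as a retract of a map built directly from $\I$. Since $\LL$-coalgebras are closed under retract and $L$ is explicitly computable on such generators, one obtains a canonical coalgebra structure whose filling is given by precomposition with $r$.

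The main obstacle is verifying that these two structures combine into an $(\LL,\RR)$-factorization in the strong AWFS sense: the coalgebra on $A \to A^\I$ and the algebra on $A^\I \to A \times A$ must satisfy the coherence condition linking the comultiplication of $L$ with the multiplication of $R$, rather than merely lying individually in $\LL$ and $\RR$. By the universal property of Garner's free factorization, this coherence reduces to a check involving only the cube $\I$ and its structural maps (endpoints and contraction), which I expect to amount to a direct verification of the standard cubical path-composition identities.
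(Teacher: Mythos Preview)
Your proposal has a genuine gap: it never addresses the \emph{normalization} problem that is the crux of the paper's argument. You construct an AWFS whose $\RR$-algebras are (uniform) Kan fibrations and then assert that the constant-path map $r:A\to A^\I$ acquires an $\LL$-coalgebra structure ``as a retract of a map built directly from $\I$'' using the terminal map $\I\to 1$ and an endpoint. But in the cartesian cube category there is no connection operation on $\I$ itself; the retraction $\I\to 1\to\I$ only shows that $r$ has a one-sided inverse, not that it is a retract of an $L$-coalgebra. To produce the required coalgebra structure one must solve the lifting problem against the free fibration $\RR(r):E(r)\to A^\I$, and for this the paper shows one needs a connection $c:A^\I\to A^{\I\times\I}$ together with transport in fibrations over $A^\I$. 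Using only the uniform Kan structure, one obtains a section $j$ of $\RR(r)$ (the lower triangle commutes), but $j\circ r = b$ fails in general: the connection produced by arbitrary box-filling need not satisfy $c(r(x))=r(r(x))$, and transport along a degenerate path need not be the identity. The paper's remedy is to pass to a \emph{different} AWFS, generated not by the bare open-box inclusions but by a quotient identifying degenerate open boxes with their degenerate fillers; the $\RR$-algebras are then the \emph{normal} uniform Kan fibrations, for which both obstructions vanish and $r$ becomes an $\LL$-map. Your sketch never introduces this normality condition, so the argument for the $\LL$-coalgebra structure on $r$ does not go through.

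A secondary issue: your generating set $J$ should not contain the boundary inclusions $\partial\square^n\hookrightarrow\square^n$. Including them forces the $\RR$-maps to be \emph{trivial} fibrations, so an $\RR$-object would have to be contractible and the theorem would be vacuous. The relevant generators are only the open-box (tube) inclusions $\sqcup^n\hookrightarrow\I^n$, crossed with representables to encode uniformity; and, as above, these must then be modified to enforce normality.
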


\noindent It follows that there is a cubical model of homotopy type theory in which the identity type of a type $A$ is taken to be the path-object $A^\I$.

We begin by reviewing the basic idea of homotopical semantics of type theory in weak factorization systems, including the somewhat technical issue of coherence that motivates the use of algebraic weak factorization systems. We then consider cubical sets and construct the desired algebraic weak factorization system.

\section{The basic homotopical interpretation}

\begin {definition}
A \emph{weak factorization system} on a category \C\ consists of two classes of arrows,
\[
\xymatrix{
\L\ \ar@{_{(}->}[r] & \C_1 & \ar@{_{(}->}[l]\ \R
}
\]
satisfying the following conditions:
\begin{enumerate}
\item Every arrow $f : X\to Y$ in \C\ factors as a left map followed by a right map,
\[
\xymatrix{
 X \ar[rr]^{f} \ar[rd]_{\L} && Y\\
& \cdot \ar[ru]_{\R}  &
}
\]
\item Given any commutative square
\[
\xymatrix{
A \ar[d]_{\L} \ar[r] & B \ar[d]^{\R} \\
C \ar[r] \ar@{..>}[ru] & D
}
\]
with an $\L$-map on the left and an $\R$-map on the right, there is a ``diagonal filler'' as indicated, making both triangles commute.

\item Each of the classes $\L$ and $\R$ is closed under retracts in the arrow category $\C^{\rightarrow}$.
\end{enumerate}
\end{definition}

Examples include (i) Groupoids (or categories), with $\R$ = isofibrations and $\L$ = injective equivalences; (ii) Simplicial sets, with $\R$ = Kan fibrations and $\L$ = acyclic cofibrations.  Since a Quillen model structure on a category by definition involves two interrelated such weak factorization systems, this provides many examples as well as the basic homotopical intuition. In a WFS, we may think of the \R-maps as ``fibrations'', i.e.\ good families of objects indexed by the codomain.  The basic idea of the homotopy interpretation is to use these as the dependent types.

\subsection{Interpreting $\Id$-types}

Let $\C$ be a category with finite limits and a WFS.  Closed types are interpreted as $\R$-objects $A$, i.e.\ those for which $A\arr 1$ is in $\R$.  Dependent types $\types{x:A}{B}$ are interpreted as \R-maps $B\arr A$. Terms $\terms{x:A}{b:B}$ are sections $b: A\arr B$ of the chosen \R-map $B\arr A$.

The formation rule for \Id-types says that each type has an identity type:
\begin{equation}\label{rule:idform}\tag{$\Id$-Form}
\begin{prooftree}
A\ \type 
\justifies
x,y: A\ \vdash\ \id{A}(x,y)\ \type
\end{prooftree}
\end{equation}
We  model this by factoring the diagonal map of (the object interpreting) $A$ as an \L-map followed by an \R-map, using axiom 1 for the WFS:
\[
\xymatrix{
& \ \id{A} \ar[rd] &\\
A \ar[rr] \ar[ru] && A\times A
}
\]
The \R-map $\id{A} \arr A\times A$ interprets the dependent type $x,y: A \vdash \id{A}(x,y)\ \type$.
Th \L-factor $A\to \id{A}$ interprets the reflexivity term $\refl(x)$ in the \Id-introduction rule:
\begin{equation}\label{rule:idintro}\tag{$\Id$-Intro}
x:A\  \vdash\ \refl(x) : \id{A}(x,x) 
\end{equation}
The  $\Id$-elimination rule has the form:
\begin{equation}\label{rule:idelim}\tag{$\Id$-Elim}
\begin{prooftree}
x,y: A, z:\id{A}(x,y)\vdash B(x,y,z)\ \type, \qquad
x: A \vdash b(x) : B(x,x,\refl(x))
\justifies
x,y: A, z:\id{A}(x,y)\vdash \mathsf{J}_b(x,y,z) : B(x,y,z)
\end{prooftree}
\end{equation}
with associated computation rule:
\begin{equation}\label{rule:idcomp}\tag{$\Id$-Comp}
\mathsf{J}_b(x,x,\refl(x)) = b(x) : B(x).
\end{equation}
The data above the line in $\Id$-elimination are interpreted as a commutative square as on the outside of the following diagram:
\begin{equation}\label{diag:idelim}
\xymatrix@=3em{
A \ar[d]_{\refl} \ar[r]^b & B \ar[d] \\
\id{A} \ar[r]_{=} \ar@{..>}[ru]_{\mathsf{J}_b} & \id{A} 
}
\end{equation}
Since $\refl$ is an \L-map, and $B\arr \id{A}$ is an \R-map (as the interpretation of a dependent type), there is a diagonal filler $\mathsf{J}_b$ as indicated.  The commuting of the lower triangle means that $\mathsf{J}_b$ is a section of $B\arr \id{A}$ and thus a term of the type required by the conclusion of the \Id-elimination rule.  The commuting of the upper triangle is exactly the $\mathsf{J}$-computation rule. (See \cite{AW}.)

\subsection{Coherence}

The interpretation just sketched is required to respect the result of substituting a term into a context, since the rules have this property.  Substitution into dependent types is interpreted as pullback, and substitution into terms as (roughly) composition. There are then three separate issues involved in giving a strict interpretation of type theory with \Id-types, and all three are called ``coherence'':

\begin{enumerate}

\item Using the fact that \R-maps are closed under retracts, one can show that they are also stable under pullback along any map, so the interpretation of dependent types as \R-maps is compatible with the interpretation of substitution as pullback.  However, the fact that the pullback operation is defined only up to isomorphism means that the interpretation must be ``strictified'' in order to model substitution strictly.  This is a known issue in the semantics of dependent type theory, with equally well-known solutions (including a recent one due to Lumsdaine and Warren \cite{LW}), and will not concern us further here.

\item The choice of factorization of the diagonal,
\[
\xymatrix{
& \ \id{A} \ar[rd] &\\
A \ar[rr] \ar[ru] && A\times A\,,
}
\]
must be stable under pulback.  Specifically, if $A$ is a type in context $\Gtypes{A}$ interpreted as an \R-map $A\arr\G$, then there is a factorization of the diagonal over $\G$ of the form
\[
\xymatrix{
& \ \id{A} \ar[rd] &\\
A \ar[rr] \ar[ru] \ar[rd] && \ar[ld] A\times_{\G} A\,,\\
& \G &
}
\]
Pulling back along any $f : \Delta \to \G$ preserves the diagonal, but not necessarily the \L-\R\ factorization, 
\[
\xymatrix{
& \ f^*\id{A} \ar[rd] &\\
f^*A \ar[rr] \ar[ru] \ar[rd] && \ar[ld] f^*A\times_{\Delta} f^*A\,.\\
& \Delta &
}
\]
Choosing an \L-\R\ factorization of the pulled-back diagonal gives an interpretation of $\id{f^*A}$ that need not agree (even up to isomorphism) with $f^*\id{A}$.  A choice of factorizations, for all diagonals, that respects pullback in this sense is said to be \emph{stable}.  One way such a stable choice of factorizations can arise is when it is determined by exponentiating by a fixed ``interval'' object $\I$, so that $ \id{A}=A^\I$. This is what happens, for example, in the groupoid model, where as an interval one can take the groupoid with two objects and two, mutually inverse, non-identity arrows.

\item Assuming a stable choice of factorizations of the diagonal, we have made a choice of diagonal fillers $\mathsf{J}$ in order to interpret the $\Id$-elimination rule.  Again, there is no reason why these choices of diagonal fillers should ``respect substitution'' in the way required for the interpretation of type theory. 
More specifically, given a diagonal filling problem as on the right below, and a square on the left with $g\in\L$,
\begin{equation}\label{diagram:coherencefillers}
\xymatrix@=3em{
A' \ar[d]_g \ar[r] & A  \ar[d] \ar[r] & \ar[d] C \\
B' \ar[r]_{f} \ar@{..>}[rru]^{\psi\ \ } & B\ar@{..>}[ru]_{\phi} \ar[r] & D
}
\end{equation}
we may have the two different diagonal fillers for the outer filling problem, namely $\psi$ and $\phi\circ f$.  Under certain conditions, we want  these two solutions to be equal.
This leads to a strengthening of the notion of weak factorization system to what is now called an \emph{algebraic} weak factorization system, which implies the existence of such \emph{natural} choices of diagonal fillers. (The idea of using this to solve this particular coherence problem is due to R.~Garner.)

\end{enumerate}

\begin{definition} A \emph{functorial factorization} on a category \C\ is a functor
\[
(L,E,R) : \C^{\arr} \to \C^{\arr\cdot\arr}
\]
taking each arrow $f : X \to Y$ to a factorization $f = R(f)\circ L(f)$,
\[
\xymatrix{
 X \ar[rr]^{f} \ar[rd]_{L(f)} && Y\\
& E(f) \ar[ru]_{R(f)}  &
}
\]
in a functorial way.  Specifically, given any $h : f \to f'$ in $\C^{\arr}$ we have a commutative diagram:
\[
\xymatrix{
X \ar[dd]_{f} \ar[rd]^{L(f)} \ar[rrr]^{h_0} &&& \ar[dd]^{f'} \ar[ld]_{L(f')} X'\\
   & E(f) \ar[r] \ar[ld]^{R(f)} & E(f') \ar[rd]_{R(f')} & \\
Y  \ar[rrr]_{h_1} &&&  Y'
}
\]
and we write $E(h) : E(f)\to E(f')$ for the evident map.
\end{definition}

Observe that, for each fixed $Y$, the functorial factorization determines an endofunctor, 
\[
R : \C/Y \to \C/Y
\]
taking $f:X\to Y$ to $R(f) :E(f) \to Y$, and that this endofunctor is pointed by $L : 1 \to R$ (abusing notation slightly).

Dually, for each fixed $X$, the functorial factorization determines an endo\-functor, 
\[
L : X/\C \to X/\C
\]
taking $f:X\to Y$ to $L(f) : X\to E(f)$, and that this endofunctor is copointed by $R : L \to 1$.

\begin{definition}
An \emph{algebraic weak factorization system} on \C\ consists of a functorial factorization $(L,E,R)$ together with:
\begin{enumerate}
\item a  multiplication $\mu : R^2 \to R$ making $(R, \mu, L)$ a monad,
\item a comultiplication $\nu : L \to L^2$ making $(L, \nu, R)$ a comonad.
\end{enumerate}
Some authors also a require distributive law for the monad over the comonad, however we shall not need this.
\end{definition}

\begin{remark}\label{AWFSisWFS} Let us show that an AWFS determines a WFS.  The factorization axiom is satisfied by the functorial factorization $f = R(f)\circ L(f)$.  We then know that $R(f)$ is an $R$-algebra and $L(f)$ is an $L$-coalgebra by the laws of monads.  Suppose given a diagonal filling problem such as  the outer square below, in which $f$ is an $L$-coalgebra and $g$ is an $R$-algebra:
\[
\xymatrix@=3em{
X \ar[dd]_{f} \ar[rd]^{L(f)} \ar[rrr]^{h_0} &&& \ar[dd]^{g} \ar[ld]_{L(g)} Z\\
   & E(f) \ar[r]^{E(h)} \ar[ld]^{R(f)} & E(g) \ar[rd]_{R(g)} & \\
Y  \ar[rrr]_{h_1} &&&  W
}
\]
Applying the factorizations of $f$ and $g$, we obtain an $L$-coalgebra structure map $\phi : Y\to E(f)$ and an $R$-algebra structure map $\psi : E(g)\to Z$. We can then set $ j = \psi\circ E(h)\circ \phi$ to obtain the required diagonal filler $j : Y \to Z$.  Finally, to ensure closure under retracts we let $\R$ be the retract closure of the $R$-algebras and $\L$ the retract closure of the $L$-coalgebras.  The factorization axiom still holds trivially, and the filling axiom is also easily seen to still hold.  Thus every AWFS determines a WFS with the left and right classes being the retracts of the $L$- and $R$- (co)algebras respectively.
\end{remark}

\begin{remark} A morphism of $L$-coalgebras $h : (f',\phi')\to (f,\phi)$ is a commutative square $fh_0 = h_1f'$ such that $E(h)\circ\phi' = \phi\circ h_1$,
\[
\xymatrix@=3em{
X' \ar[dd]_{f'} \ar[rd]^{L(f')} \ar[rrr]^{h_0} &&& \ar[dd]^{f} \ar[ld]_{L(f)} X\\
   & E(f') \ar[r]^{E(h)} \ar[ld]^{R(f')} & E(f) \ar[rd]_{R(f)} & \\
Y'  \ar[rrr]_{h_1} \ar@<1ex>[ur]^{\phi'} &&& \ar@<-1ex>[ul]_{\phi} Y\,.
}
\]
It is easy to see that the naturality condition for diagonal fillers mentioned in \eqref{diagram:coherencefillers} is satisfied for the fillers constructed algebraically as in Remark \ref{AWFSisWFS}, when the left-hand square is a morphism of $L$-algebras in this sense.
\end{remark}

We summarize the result of this section with the following (cf.~\cite{GvdB}).

\begin{proposition}
Let \C\ be a category with finite limits, an algebraic weak factorization system, and a stable choice of factorizations for all diagonal maps.  Then \C\ admits a model of type theory with \Id-types.
\end{proposition}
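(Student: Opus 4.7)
The plan is to assemble the interpretation in three stages, corresponding to the three coherence issues discussed above. First I would dispose of substitution-as-pullback coherence (issue 1) by a black-box appeal to the Lumsdaine--Warren construction \cite{LW}: starting from \C\ equipped with its class of \R-maps (which are stable under pullback, being the retract-closure of the right maps of an AWFS), produce a contextual category $\C^\natural$ in which contexts are finite iterated extensions by \R-maps and substitution is strictly associative. Types in context $\G$ are then interpreted as \R-maps into $\G$, and terms as sections in the standard way.

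Next, to interpret $\Id$-formation, I would use the stability hypothesis: for each \R-map $A\to\G$ representing a type, form the fibrewise diagonal $A\to A\times_\G A$, factor it as an $L$-coalgebra followed by an $R$-algebra using the stable choice, and declare the right factor $\Id_A\to A\times_\G A$ to be the interpretation of $\id{A}$. Stability of the factorization under pullback along any $f:\Delta\to\G$ ensures that this interpretation respects substitution strictly in $\C^\natural$. The reflexivity term $\refl(x)$ is interpreted by the $L$-factor $A\to\Id_A$, which is indeed a section of the display map of the interpreted type.

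For $\Id$-elimination, given the data of the rule --- an \R-map $B\to\Id_A$ and a term $b:A\to B$ over $\refl$ --- I would invoke the AWFS to produce a canonical filler of the lifting square \eqref{diag:idelim}, exactly as in Remark \ref{AWFSisWFS}: since $\refl$ carries a canonical $L$-coalgebra structure from the factorization of the diagonal and $B\to\Id_A$ is (a retract of) an $R$-algebra, one obtains $\mathsf{J}_b = \psi\circ E(h)\circ\phi$, where $\phi$ and $\psi$ are the respective coalgebra and algebra structure maps. The upper triangle of the filler square gives exactly the $\Id$-computation rule $\mathsf{J}_b(x,x,\refl(x)) = b(x)$, and the lower triangle says that $\mathsf{J}_b$ is a section of $B\to\Id_A$, hence a term of the type required by $\Id$-elimination.

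The main obstacle, as anticipated under coherence issue (3), is verifying that this canonical $\mathsf{J}_b$ is stable under substitution: pulling back the elimination data along $f:\Delta\to\G$ must yield exactly the pullback of the original filler, not merely an isomorphic one. This is where the AWFS structure does its work. The algebraic fillers of Remark \ref{AWFSisWFS} are functorial in morphisms of $L$-coalgebras on the left, and stability of the factorization of diagonals means that the pullback square between $\refl_{f^*A}$ and $\refl_A$ is precisely such a morphism, with the pulled-back $L$-coalgebra structure agreeing strictly with the canonical one on $f^*\refl_A$. Combining these two facts yields $f^*\mathsf{J}_b = \mathsf{J}_{f^*b}$ strictly, which together with the three steps above gives a sound interpretation of the formation, introduction, elimination and computation rules for $\Id$-types.
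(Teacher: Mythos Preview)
Your proposal is correct and follows essentially the same approach as the paper: the proposition there is stated without a separate proof, as a summary of the preceding discussion (with a reference to \cite{GvdB}), and your three stages recapitulate exactly that discussion---Lumsdaine--Warren for substitution coherence, the stable factorization hypothesis for $\Id$-formation and introduction, and the algebraic fillers of Remark~\ref{AWFSisWFS} together with their naturality in $L$-coalgebra morphisms for $\Id$-elimination and its stability under substitution.
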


\section{Cubical sets}

Our goal is to make an algebraic weak factorization system on the cubical sets, but let us first recall \emph{why} the category of cubical sets is a good setting for a model of \Id-types.  The basic examples of WFSs and homotopy are topological spaces and simplicial sets, but these models are not entirely satisfactory, at least from a logical point of view.  In particular, they seem to lack the combinatorial character that would make them \emph{constructive}.  The cubical approach seems to be better suited to giving such a constructive treatment, as has recently been shown by T.~Coquand and his coworkers (see \cite{BCH}).  It is worth noting in this connection that the goal here is \emph{not} to give a constructive interpretation of the univalence axiom, as Coquand et al.\ have done, but only to investigate a (particular) cubical model as arising from an algebraic weak factorization system.

By way of motivation for using cubical sets, recall that we will interpret the \Id-type of an object $A$ using a factorization of the diagonal map,
\begin{equation}\label{diag:pathfact}
\xymatrix{
& \ PA \ar[rd] & \\
A \ar[rr] \ar[ru] && A\times A\,.
}
\end{equation}
We regard $PA$ as an abstract  ``pathspace'' $A^\I$.  If there is an ``interval object'' $\I$ with two points $1\rightrightarrows\I$, then exponentiating $A$ by the structure:
\[
\xymatrix{
& \ar[ld] \I  & \\
1 && \ar[ll] \ar[ul] 1+1\,
}
\]
will result in a factorization of the form \eqref{diag:pathfact} (without respect to any WFS).  Moreover, this factorization will automatically be stable under pullback.

The idea that identity proofs are paths and that $\id{A} = A^\I$ is supported by the fact that dependent types have the \emph{path-lifting property}: given $a_0,a_1:A$  and $p:\id{A}(a_0,a_1)$ and $\types{x:A}{B(x)}$  and $b_0:B(a_0)$, there is a distinguished $b_1:B(a_1)$, called the \emph{transport} of $b_0$ along $p$ and denoted $p_*(b_0)$.  This term $p_*(b_0)$ can easily be found by ``path induction'' (i.e.\ \Id-elimination) on $p:\id{A}(a_0,a_1)$.
Indeed, there is even a path $\tilde{p} : \id{B}(b_0,b_1)$, where $B=\sum_{x:A}B(x)$, such that $\pi_1(\tilde{p})=p$.
\[
\xymatrix{
B \ar[dd]_{\pi_1} & B(a_0) \ar[r]^{p_*} & B(a_1) \\
   	& b_0 \ar@{..>}[r]^{\tilde{p} } & p_*b_0  \\
A  	&	a_0 \ar[r]_{p} &  a_1
}
\]
Note that this says exactly that the endpoint inclusion $0 : 1\to \I$ has the left-lifting property with respect to the dependent projection $\pi:B\to A$,
\[
\xymatrix@=3em{
1 \ar[d]_{0} \ar[r]^{b_0} & B \ar[d]^{\pi} \\
\I \ar[r]_{p} \ar@{..>}[ru]_{\tilde{p}} & A\,.
}
\]
Thus the endpoint inclusion $0 : 1\to \I$ should be an $\L$-map, since the dependent types should be $\R$-maps.

An important consequence of the proposed stipulation $\id{A} = A^\I$ is that the iterated or ``higher'' \Id-types can be interpreted as ``cube types'',
\begin{align*}
\id{A}\ &=\ A^\I \\
\id{\id{A}}\ &=\ (A^\I)^\I\ \cong\ A^{\I\times\I} \\
\id{\id{\dots\id{A}}}\ &\cong\ A^{\I\times\dots\times\I}
\end{align*}
In this way, at least some of the operations on the higher \Id-types (reflecting the higher algebra of $\infty$-groupoids) can be represented by algebraic operations on the cubes, in the form $\I^n \to \I^m$.

\subsection{The category of cubical sets}

The objects $\I^n = \I\times \dots\times \I$ thus play a special role in the  interpretation of \Id-types.  The category of cubical sets gives special prominence to these objects, and to the interval $\I$ in particular.  In a precise sense, it is the free cocomplete category generated by the finite cubes $1,\ \I,\ \I\times\I, \dots, \I^n, \dots$ (and the particular version that we shall use is the free \emph{topos} generated by an ``interval object'': $0,1 : 1\rightrightarrows \I,\ \cdot\vdash 0\neq 1$).

\begin{definition}
The category \cSet\ of \emph{(cartesian) cubical sets} is the presheaf category
\[
\cSet\ =\ \psh{\C_\times}
\]
on the category $\C_\times$ of \emph{(cartesian) cubes}, defined equivalently as:
\begin{enumerate}
\item the free category with finite products on an \emph{interval object} $0,1 : 1\rightrightarrows \I$,
\item the opposite category,
\[
\C_\times = \op{\B},
\]
of the category $\B$ of finite strictly bipointed sets and bipointed functions.  The objects may be taken to be sets of the form $$[n] = \{\bot, x_1, \dots, x_n, \top\}$$ and maps $f : [m]\to [n]$ are functions that preserve the distinguished elements $\bot, \top$.

\item  the syntactic category $\C(\T)$ of the algebraic theory $\T$ with just two constant symbols $\bot, \top$ (and no equations).
\end{enumerate}
The equivalence of these three specifications of $\C_\times$ is an easy application of Lawvere duality.
\end{definition}

\begin{remark} There are many other notions of cube category and cubical sets in the literature. See \cite{awodey:cubes} for  comparisons of some.
\end{remark}

Next, consider the representable objects,
\[
\y : \C_\times \hook \cSet
\]
and let
\[
\I^n :=\  \y([n]) = \Hom_{\C_\times}\!(\,-\,, [n]).
\]
We can regard the objects $\I^n$ as the ``geometric $n$-cubes''; indeed we have $\I^n \cong \I\times\dots\times\I$, because the Yoneda embedding preserves products.  Note that by the Yoneda lemma, for any cubical set $X$,  maps $\I^n \to X$ correspond uniquely to elements of the set $X_n = X([n])$, which are called the ``$n$-cubes of $X$''.

\begin{proposition}\label{prop:pathobject}
For any cubical set $X$, the pathobject $X^\I$ is the ``shift by one dimension'' of $X$, 
\[
(X^\I)_n \cong X_{(n+1)}\,.
\]
\end{proposition}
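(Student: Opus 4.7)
The plan is a direct Yoneda calculation, threading through the exponential adjunction in \cSet. Starting from the definition, $(X^\I)_n = (X^\I)([n])$, which by the Yoneda lemma equals $\hom_{\cSet}(\I^n, X^\I)$. Since \cSet\ is a presheaf category, it is cartesian closed, so applying the exponential adjunction gives
\[
\hom_{\cSet}(\I^n, X^\I) \;\cong\; \hom_{\cSet}(\I^n \times \I,\, X).
\]

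The next step is to identify $\I^n \times \I$ with $\I^{n+1}$ inside \cSet. This uses two facts already noted in the excerpt: first, the Yoneda embedding $\y : \C_\times \hookrightarrow \cSet$ preserves products, and second, by the first characterization of $\C_\times$ as the free category with finite products on an interval object, the representing objects satisfy $[n] \cong [1] \times \cdots \times [1]$ ($n$ factors) in $\C_\times$, so in particular $[n] \times [1] \cong [n+1]$. Combining these, $\I^n \times \I \cong \y([n]) \times \y([1]) \cong \y([n] \times [1]) \cong \y([n+1]) = \I^{n+1}$.

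Substituting and applying the Yoneda lemma once more,
\[
\hom_{\cSet}(\I^n \times \I,\, X) \;\cong\; \hom_{\cSet}(\I^{n+1}, X) \;\cong\; X_{(n+1)},
\]
which is the desired isomorphism. The whole chain is natural in $n$, giving the stated dimension shift.

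There is no real obstacle here; the only mild subtlety is justifying $\I^n \times \I \cong \I^{n+1}$, and that is handled purely by the universal property of $\C_\times$ as the free finite-product category on an interval object together with product-preservation of the Yoneda embedding. One could alternatively verify it via the bipointed-set description (characterization~2), where the product in $\C_\times$ corresponds to the coproduct of bipointed sets identifying the two basepoints, so that $[n] \times [1]$ has $n+1$ non-basepoint elements and thus agrees with $[n+1]$; but this is redundant given characterization~1.
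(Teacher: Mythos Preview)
Your proof is correct and follows essentially the same approach as the paper: a Yoneda calculation combined with the exponential adjunction and the identification $\I^n \times \I \cong \I^{n+1}$. You provide more explicit justification for that identification than the paper does, but the chain of isomorphisms is identical.
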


\begin{proof}
\[
\begin{split}
(X^\I)_n\ &\cong\ \hom(y[n], X^\I)\ \cong\ \hom(\I^n, X^\I)\ \cong\ \hom(\I^n\times \I, X)\\
&\cong\ \hom(\I^{n+1}, X)\cong\ \hom(y[n+1], X)\ \cong\ X_{(n+1)}.
\end{split}
\]
\end{proof}

This combinatorial description of the pathobject $X^\I$ as a shift in dimension has consequences for the model of type theory; in particular, it  leads to certain equations holding strictly that are usually satisfied only weakly or ``up to homotopy''.

\begin{corollary}\label{cor:rightadjpath}
The pathobject functor $X \mapsto X^\I$ has a \emph{right} adjoint.
\end{corollary}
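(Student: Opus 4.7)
The plan is to realize the path-object functor $(-)^\I : \cSet \to \cSet$ as a precomposition functor along an endofunctor of $\C_\times$, and then invoke the general fact that every such precomposition functor between presheaf categories admits a right adjoint, namely right Kan extension.

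To this end, I would define $\sigma : \C_\times \to \C_\times$ to be the functor $(-) \times [1]$, i.e.\ product with the interval. Since products in $\C_\times$ correspond to coproducts in $\B$ of strictly bipointed sets, and such coproducts identify the two pairs of basepoints, one computes $\sigma([n]) \cong [n+1]$. Writing $\sigma^*(X) = X \circ \op{\sigma}$, the chain of isomorphisms from the proof of Proposition \ref{prop:pathobject},
\[
(X^\I)_n \cong \hom(\I^n, X^\I) \cong \hom(\I^n \times \I, X) \cong \hom(\I^{n+1}, X) \cong X_{n+1} = (\sigma^* X)_n,
\]
then upgrades to a natural isomorphism of functors $(-)^\I \cong \sigma^* : \cSet \to \cSet$, essentially because the step $\I^n \times \I \cong \I^{n+1}$ is the Yoneda image of the identity on $\sigma([n]) = [n]\times[1]$.

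Once $(-)^\I$ has been identified with the precomposition functor $\sigma^*$, the conclusion is immediate: precomposition with a functor between small categories always has a right adjoint $\sigma_*$, given by right Kan extension along $\op{\sigma}$, and transporting across the isomorphism $(-)^\I \cong \sigma^*$ yields the desired right adjoint to the path-object functor. The main technical point is precisely the naturality lift of Proposition \ref{prop:pathobject}, which is stated there only objectwise: one must check that the displayed bijection is natural in $[n]$ as well as in $X$, which reduces to observing that each link in the Yoneda chain is induced by a morphism of $\C_\times$ via the Yoneda lemma and the $(-)\times\I \dashv (-)^\I$ adjunction.
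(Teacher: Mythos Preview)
Your proposal is correct and is essentially the same as the paper's own proof: both identify $(-)^\I$ with precomposition by the ``successor'' endofunctor $S:\C_\times\to\C_\times$ (your $\sigma = (-)\times[1]$ is exactly the paper's $S[n]=[n+1]$) and then invoke the general fact that precomposition between presheaf categories has a right adjoint given by right Kan extension. Your version is slightly more explicit about naturality and about realizing $S$ concretely as product with $[1]$, but the argument is the same.
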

\begin{proof}
The functor $X \mapsto X^\I$ is given by precomposition with the ``successor'' functor $S : \Cc\to\Cc$ with $S[n] = [n+1]$, since $X^\I([n]) = X([n+1]) = X(S[n]) = (S^*(X))([n])$.  But precomposition always has both left and right adjoints $S_! \dashv S^* \dashv S_*$, the right one of which can be calculated as:
\[
S_*(X)_n \cong \hom(y[n], S_*X) \cong \hom(S^*(y[n]), X)\ \cong \hom(\Cc(S(-), [n]), X).
\]
\end{proof}

We need the following fact in order to calculate the right adjoint further. We mention that a similar fact holds for the generic object in the object classifier topos $\Set^{\mathsf{Fin}}$, and in the Schanuel topos $\mathrm{Sh}(\mathrm{Aut}(\N))$, and is used to give an algebraic treatment of variable binding in the theory of ``abstract higher-order syntax''.

\begin{lemma}\label{lemma:binomial}
For the 1-cube $\I$ in $\cSet$, we have $\I^\I\ \cong\ \I+1$.
\end{lemma}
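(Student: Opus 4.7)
The plan is to compute both sides levelwise and exhibit an explicit natural bijection. First, by Proposition~\ref{prop:pathobject}, for each $[n]$ we have
\[
(\I^\I)_n \;\cong\; \I_{n+1} \;=\; \Hom_{\C_\times}([n+1],[1]) \;=\; \Hom_{\B}([1],[n+1]),
\]
using $\C_\times=\op{\B}$ in the last step. A bipointed function $[1]\to[n+1]$ must fix $\bot$ and $\top$, so it is determined by the image of the unique non-distinguished generator $x_1$, which can be any of the $n+3$ elements of $[n+1]=\{\bot,x_1,\ldots,x_{n+1},\top\}$. Hence, as a bare set, $(\I^\I)_n\cong[n+1]$. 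On the other side, the same Yoneda computation gives $\I_n\cong[n]$, so $(\I+1)_n\cong[n]\sqcup\{*\}$, which also has $n+3$ elements.

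Next, I would define the candidate isomorphism $\alpha_n\colon(\I^\I)_n\to(\I+1)_n$ by
\[
\bot\mapsto\bot,\qquad x_i\mapsto x_i\ (1\le i\le n),\qquad x_{n+1}\mapsto*,\qquad\top\mapsto\top.
\]
That is, the ``new direction'' generator $x_{n+1}$ that witnesses the extra $\I$-coordinate is sent to the point of the terminal summand, and the remaining generators restrict to $\I_n$. Bijectivity is immediate from the cardinality count.

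The main point is naturality in $[n]$. For $g\colon[n]\to[m]$ in $\B$, the induced action on $(\I^\I)$ comes from the successor functor $S$ of Corollary~\ref{cor:rightadjpath}: it is precomposition with $S(g^{\mathrm{op}})=g^{\mathrm{op}}\times\mathrm{id}_{[1]}\colon[m+1]\to[n+1]$ in $\Cc$, equivalently postcomposition with the coproduct map $g+\mathrm{id}_{[1]}\colon[n+1]\to[m+1]$ in $\B$, where $[n+1]=[n]+[1]$. Under the identification $\Hom_\B([1],[k+1])\cong[k+1]$, this map sends $x_i\mapsto g(x_i)$ for $i\le n$ and $x_{n+1}\mapsto x_{m+1}$; on the $(\I+1)$-side, the action of $g$ sends $x_i\mapsto g(x_i)$ and $*\mapsto *$. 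Comparing the two recipes shows that $\alpha$ intertwines them, which is exactly the naturality square. The expected obstacle is nothing more than keeping the variances straight and verifying that the coproduct decomposition $[n+1]=[n]+[1]$ in $\B$ is preserved by $S^{\mathrm{op}}$; once this is unwound the isomorphism is forced by sending $x_{n+1}\leftrightarrow *$.
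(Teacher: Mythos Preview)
Your proposal is correct and follows essentially the same route as the paper: compute both sides levelwise via $\C_\times=\op{\B}$ and match the cardinalities $n+3$. The paper stops after the count and simply asserts that ``the isomorphism is natural in $n$,'' whereas you go further and actually exhibit the bijection $x_{n+1}\leftrightarrow *$ and verify naturality by unwinding the action of the successor functor as $g+\mathrm{id}_{[1]}$ in $\B$; this extra work is a genuine improvement, since a mere cardinality count does not by itself establish a \emph{natural} isomorphism.
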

\begin{proof}
For any $[n] \in \Cc$ we have:
\[
(\I^\I)_n \cong \I_{(n+1)} \cong \Hom(\I^{(n+1)},\I)\cong \Cc([n+1],[1])\cong \B([1], [n+1])\cong n+3.
\]
On the other hand,
\[
(\I+1)_n \cong \I_n + 1_n \cong \Hom(\I^n, \I) + 1 \cong \B([1],[n]) +1 \cong (n+2) +1.
\]
The isomorphism is natural in $n$.
\end{proof}

\begin{definition}
Let us write
\[
X_\I = S_*(X)
\]
for the right adjoint of the path object functor $X^\I = S^*X$.
\end{definition}

\begin{corollary}
We have the following calculation for the right adjoint $X_\I$:
\begin{align*}
(X_\I)_n &\cong \hom(\I^n, X_\I) \\
&\cong \hom((\I^n)^\I, X) \\
&\cong \hom((\I^\I)^n, X) \\
&\cong \hom((\I+1)^n, X) \\
&\cong \hom(\I^n + C^n_{n-1}\I^{n-1} + \dots + C^n_{1}\I+1, X) \\
&\cong X_n \times X_{n-1}^{C^n_{n-1}} \times \dots \times X_1^{C^n_{1}}\times X_0,
\end{align*}
where $C^n_{k} = \binom{n}{k}$ is the usual binomial coefficient.
\end{corollary}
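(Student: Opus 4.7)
The statement is itself a chain of six isomorphisms, so the plan is to verify each step in turn by invoking a standard categorical fact or a previously established result from the excerpt.

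First, $(X_\I)_n \cong \hom(\I^n, X_\I)$ is just the Yoneda lemma applied to the representable $\I^n = \y[n]$. Second, $\hom(\I^n, X_\I) \cong \hom((\I^n)^\I, X)$ is an application of the adjunction $(-)^\I \dashv (-)_\I$ noted just before the statement, which was produced in Corollary \ref{cor:rightadjpath} (recall $X_\I = S_*X$ while $Y^\I = S^*Y$). Third, $(\I^n)^\I \cong (\I^\I)^n$ follows because in any cartesian closed category the exponential $(-)^\I$ is a right adjoint and hence preserves products, so by induction on $n$ we get $(\I \times \cdots \times \I)^\I \cong \I^\I \times \cdots \times \I^\I$; applying $\hom(-,X)$ delivers the third isomorphism.

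The fourth isomorphism simply substitutes $\I^\I \cong \I + 1$ from Lemma \ref{lemma:binomial}. The fifth isomorphism is the crux, and amounts to the binomial expansion $(\I + 1)^n \cong \sum_{k=0}^{n} \binom{n}{k} \I^k$ in $\cSet$. Since $\cSet$ is a topos, it is in particular a distributive category: binary products distribute over arbitrary coproducts (equivalently, $(-)\times A$ is a left adjoint and so preserves colimits). Thus one may expand $(\I+1)^n$ by iterated distributivity and collect terms by the number of factors of $\I$ chosen; there are $\binom{n}{k}$ ways to choose which $k$ of the $n$ factors contribute $\I$ rather than $1$. Finally, the sixth isomorphism uses that $\hom(-,X)$ converts coproducts into products, and that $\hom(\I^k, X) \cong X_k$ by Yoneda once more; the exponent $\binom{n}{k}$ reflects the $\binom{n}{k}$-fold coproduct of $\I^k$ in the preceding line.

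The only step requiring any real thought is the binomial expansion: one should either give an inductive argument on $n$ (using $(\I+1)^{n+1} \cong (\I+1)^n \times \I + (\I+1)^n \times 1$ and distributing) or appeal to the general fact that in a distributive category the coproduct-of-powers satisfies $(A+B)^n \cong \sum_k \binom{n}{k} A^k B^{n-k}$. All remaining steps are formal manipulations with adjunctions, Yoneda, and cartesian closure. Naturality of each isomorphism in $[n] \in \Cc$ (needed to package the pointwise data into the presheaf $X_\I$) is immediate, as every isomorphism used is natural.
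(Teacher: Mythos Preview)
Your proposal is correct and matches the paper's approach exactly: the paper presents the chain of isomorphisms as the corollary itself with no separate proof, leaving the justifications implicit, and you have correctly identified each one (Yoneda, the adjunction $(-)^\I \dashv (-)_\I$ from Corollary~\ref{cor:rightadjpath}, preservation of products by the right adjoint $(-)^\I$, Lemma~\ref{lemma:binomial}, distributivity in the topos $\cSet$, and $\hom(-,X)$ turning coproducts into products). There is nothing to add.
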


\subsection{Box filling}

We now turn to the determination of the right maps for our AWFS.    We know that they should at least have the path-lifting property, since that is forced by the rules of type theory.  We also know that for any type $A$, the interpretation of the \Id-type should be a right map into $A\times A$, since it is a dependent type. Finally, we want to use the pathobject $A^\I$ as the interpretation of $\Id{A}$ in view of the foregoing considerations. These constraints lead us directly to the box-filling condition.

\begin{proposition}\label{prop:pathliftboxfill}
$A^\I \to A\times A$ has path lifting if and only if $A$ has (2-)box filling.
\end{proposition}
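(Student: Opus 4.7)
The plan is to transpose the path-lifting condition across the adjunction $(-) \times \I \dashv (-)^\I$, so that ``$A^\I \to A \times A$ has path lifting'' becomes an extension condition for $A$ along an open-box inclusion $\sqcap \hookrightarrow \I^2$, which is precisely 2-box filling. There is no deep obstacle: the proof is essentially a direct computation via Yoneda plus the product-exponential adjunction, so the real work lies in correctly matching up faces and corners of the cubes.

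First, I would unpack a general lifting square
\[
\xymatrix@=3em{
1 \ar[d]_{0} \ar[r]^{p} & A^\I \ar[d] \\
\I \ar[r]_{\langle u,v\rangle} \ar@{..>}[ru]_{\tilde p} & A\times A
}
\]
in terms of cubes of $A$. The map $A^\I \to A\times A$ is induced by the two endpoints $0,1 : 1 \rightrightarrows \I$, hence under the adjunction it is ``evaluate-at-endpoints'' applied to a path. The top edge $p : 1 \to A^\I$ transposes to a 1-cube $\I \to A$, and the bottom edge $\langle u,v\rangle : \I \to A \times A$ records a pair of 1-cubes $u, v : \I \to A$. Commutativity of the square says exactly that $p$, $u$, and $v$ fit together as three adjacent edges of a prospective 2-cube in $A$, agreeing at the three required shared vertices.

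Next, I would interpret a diagonal filler $\tilde p : \I \to A^\I$: by currying it is a 2-cube $\I \times \I \to A$, and the two commuting triangles translate, face by face, into the conditions that this 2-cube restrict to $p$, $u$, and $v$ on the three specified faces of $\I^2$. Thus a filler is precisely a 2-cube of $A$ extending the given three edges. Packaging the previous paragraph with this one yields a natural bijection between lifting problems of $0 : 1 \to \I$ against $A^\I \to A \times A$ and extension problems for $A$ along the inclusion $\sqcap \hookrightarrow \I^2$, where $\sqcap$ is the subobject of $\I^2$ obtained as the pushout of the three relevant faces of $\I$ glued along their shared corners. The biconditional then follows immediately: every such lifting square admits a filler iff every 2-open box in $A$ admits a filler, i.e.\ iff $A$ has 2-box filling.
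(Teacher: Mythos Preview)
Your proposal is correct and follows essentially the same route as the paper: both arguments identify the lifting square for $A^\I \to A\times A$ with an open 2-box in $A$ via the exponential transpose $(-)\times\I \dashv (-)^\I$. The only cosmetic difference is that the paper packages the two parallel edges together as a single map $\I \to A^{\partial\I}$ (writing $A\times A \cong A^{\partial\I}$), thereby decomposing the open box as the pushout of $1\times\I$ and $\I\times\partial\I$ over $1\times\partial\I$, whereas you treat the three edges individually; this is the same argument in slightly different bookkeeping.
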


Before giving the proof, we require some conventions that will be useful later on.
Diagramatically, path lifting for $A^\I \to A\times A$ means that for any (outer) square of the form
\begin{equation}\label{diagram:pathliftpath}
\xymatrix{
1 \ar[d] \ar[r] & A^\I \ar[d] \\
\I \ar[r] \ar@{..>}[ru] & A\times A
}
\end{equation}
there is a diagonal filler as indicated.  There are two maps $0,1 : 1\rightrightarrows \I$, and the path lifting condition is required with each of these cases occurring on the left.

An \emph{open 2-box} in the 2-cube $\I^2$ is by definition a subobject $$\sqcup^2_{j,e} \mono \I^2$$ obtained as the union, in the poset $\mathrm{Sub}(\I^2)$, of all the face maps $\alpha^d_i : \I \mono \I^2$ but one,
\[
\sqcup^2_{j,e}  = \bigcup_{(i,d)\neq (j,e)} \alpha_i^d.
\]
The index $j = 1, 2$ is the coordinate in which the box is open, while $e = 0,1$ indicates which face of the box is missing, bottom or top.   Because we are in the symmetric situation, where the product $\I\times \I$ can be twisted, it suffices to  consider only  boxes that are open in the first coordinate, since the others can be constructed from those.  Thus we may omit the index $j$, writing $\sqcup^2_e$ for $\sqcup^2_{1,e}$. Moreover, let us write $\sqcup^2 = {\sqcup}^2_1$ and $\sqcap^2 = {\sqcup}^2_0$.  The \emph{(upper and lower) open $n$-boxes} $\sqcup^n, \sqcap^n \mono \I^n$ are defined analogously.

The open 2-box can be constructed as the dotted arrow in the following pushout diagram, in which we write $\partial\I = 1+1 \mono \I$ for the boundary of the 1-cube $\I$.
\[
\xymatrix{
1\times \partial\I \ar[d] \ar[r]  & 1\times \I \ar[d] \ar@/{}^{1pc}/[rdd] \\
\I \times \partial\I \ar[r] \ar@/{}_{1pc}/[rrd] &  \sqcup^2 \pocorner \ar@{>.>}[rd] \\
 &&  \I\times\I 
}
\]
This is the upper open box determined by the case where $1 : 1\to\I$ is on the far left; the case of $0: 1\to\I$ determines the lower open box $\sqcap^2 \mono \I^2$, but we will not always mention this case separately.  

\begin{definition}\label{def:boxfilling}
A cubical set $A$  has \emph{$n$-box filling} if every map to it from an open upper $n$-box extends to the whole $n$-cube,
\[
\xymatrix{
\sqcup^n \ar@{ >->}[d] \ar[r]  & A  \\
\I^n \ar@{ ..>}[ur] 
}
\]
and similarly for the lower box $\sqcap^n \mono \I^n$.  
A map $B\to A$ has \emph{$n$-box filling} if every commutative square of the following form has a diagonal filler,
\[
\xymatrix{
\sqcup^n \ar@{ >->}[d] \ar[r]  & B \ar[d]  \\
\I^n \ar@{ ..>}[ur] \ar[r] & A
}
\]
and similarly for the lower box $\sqcap^n \mono \I^n$.  
\end{definition}

\begin{proof}[Proof of the Proposition]
The 2-box filling condition is clearly equivalent to saying that given any maps $a$ and $b$ commuting with the span in the upper-left corner of the following diagram, there exists a 2-cube $c :  \I\times\I \to A$  making the whole diagram commute.  
\begin{equation}\label{diagram:boxfillingrev}
\xymatrix{
1\times \partial\I \ar[d] \ar[r]  & 1\times \I \ar[d] \ar@/{}^{1pc}/[rdd]^{a} \\
\I \times \partial\I \ar[r] \ar@/{}_{1pc}/[rrd]_{b} & \I\times\I \ar@{.>}[rd]|{c} \\
 &&  A
}
\end{equation}
This formulation eliminates the pushout and replaces the open box by a decomposition.

Now let us rewrite diagram \eqref{diagram:pathliftpath} with the projection from the path object $A^\I \to A\times A$  replaced by
\[
A^i : A^\I \to A\times A \cong A^{1+1} = A^{\partial\I} 
\] 
where $i := [0,1] : \partial \I = 1+1 \to \I$ is the copair, to give:
\begin{equation*}\label{diagram:pathliftpathrev}
\xymatrix{
1 \ar[d] \ar[r] & A^\I \ar[d]^{A^i} \\
\I \ar[r] \ar@{..>}[ru] & A^{\partial\I}
}
\end{equation*}
But this is just the exponential transpose of the diagram \eqref{diagram:boxfillingrev}, where the corresponsing transposed maps are as indicated:
\begin{equation}\label{diagram:pathliftpathrev}
\xymatrix{
1 \ar[d] \ar[r]^{a'} & A^\I \ar[d]^{A^i} \\
\I \ar[r]_{b'} \ar@{..>}[ru]^{c'} & A^{\partial\I}
}
\end{equation}
\end{proof}

The foregoing can be generalized to higher dimensions as follows (see \cite{awodey:cubes}):

\begin{proposition}\label{prop:main}
For any cubical set $X$ and any $n\geq 1$,  the canonical map $X^\I \to X\times X$ has $n$-box filling iff $X$ has $(n+1)$-box filling.
\end{proposition}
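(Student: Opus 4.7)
The plan is to generalize the proof of Proposition~\ref{prop:pathliftboxfill} by replacing the $1$-cube on the base with the $n$-cube, using the same exponential transpose trick. Specifically, I would use cartesian closure of $\cSet$ (the adjunctions $(-) \times \I \dashv (-)^\I$ and $(-) \times \partial\I \dashv (-)^{\partial\I}$), together with the identification $X \times X \cong X^{\partial\I}$ via the copair $i = [0,1] : \partial\I \to \I$, to translate an $n$-box lifting problem for $X^\I \to X \times X$ into an $(n+1)$-box lifting problem for $X$.

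First I would fix an $n$-box lifting problem
\[
\xymatrix{
\sqcup^n \ar@{ >->}[d] \ar[r] & X^\I \ar[d]^{X^i} \\
\I^n \ar[r] & X^{\partial\I}
}
\]
and transpose it across the two adjunctions. The top horizontal map transposes to $\sqcup^n \times \I \to X$ and the bottom to $\I^n \times \partial\I \to X$. Commutativity of the square says precisely that these agree on their intersection $\sqcup^n \times \partial\I$, so together they correspond to a single map from the pushout
\[
P\ :=\ (\sqcup^n \times \I) \cup_{\sqcup^n \times \partial\I} (\I^n \times \partial\I)
\]
into $X$. A diagonal filler for the original square corresponds bijectively to an extension of this map along the canonical monomorphism $P \mono \I^{n+1}$.

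Second I would identify $P \mono \I^{n+1}$ with the open $(n+1)$-box inclusion $\sqcup^{n+1} \mono \I^{n+1}$. By definition $\sqcup^{n+1}$ is the union, in $\mathrm{Sub}(\I^{n+1})$, of all codimension-one face inclusions $\alpha_i^d : \I^n \mono \I^{n+1}$ except $\alpha_1^1$. The summand $\sqcup^n \times \I$ contributes precisely the faces $\alpha_i^d$ for $1 \leq i \leq n$ and $(i,d) \neq (1,1)$, while $\I^n \times \partial\I$ contributes the two faces $\alpha_{n+1}^0, \alpha_{n+1}^1$ in the new direction; together these make up exactly the $2(n+1)-1$ faces of $\sqcup^{n+1}$, with the expected intersection $\sqcup^n \times \partial\I$. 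The analogous identification holds for the lower box $\sqcap^{n+1}$ by symmetry.

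The main obstacle is this pushout calculation: one must verify carefully, in terms of the representables, that the two summands cover precisely $\sqcup^{n+1}$ and that their intersection is $\sqcup^n \times \partial\I$. Once the transposition and this combinatorial identification are established, both directions of the equivalence follow immediately, since the adjunctions put the $n$-box filling problems for $X^\I \to X^{\partial\I}$ in bijection with the $(n+1)$-box filling problems for $X$, and the argument is entirely symmetric in the upper and lower box cases.
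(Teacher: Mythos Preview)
Your proposal is correct and follows exactly the route the paper indicates: the paper does not spell out a proof here but says ``the foregoing can be generalized to higher dimensions'' and cites \cite{awodey:cubes}, and your argument is precisely that generalization of the proof of Proposition~\ref{prop:pathliftboxfill}, replacing the endpoint inclusion $1\to\I$ by the open-box inclusion $\sqcup^n\mono\I^n$ and using the same exponential transpose to identify the pushout $(\sqcup^n\times\I)\cup_{\sqcup^n\times\partial\I}(\I^n\times\partial\I)$ with $\sqcup^{n+1}$. The combinatorial identification you flag as the main obstacle is indeed the only point requiring care, and your description of which faces each summand contributes is accurate.
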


Since $1$-box filling in any object $X$ is trivial, we conclude that a cubical set $X$ has $n$-box filling for all $n\geq 1$ just in case $X^\I \to X\times X$ does.  In this way, we are led to make the following definition.

\begin{definition}
A map of cubical sets $f : Y\to X$ is a \emph{Kan fibration} if it has $n$-box filling for all $n\geq 1$.  
A cubical set $X$ is a \emph{Kan complex} if the map $X\to 1$ is a Kan fibration.  
\end{definition}

\begin{remark}
Since the composition of two maps with $n$-box filling clearly also has $n$-box filling, if an object $A$ has $(n+1)$-box filling then its pathobject $A^\I$ has only $n$-box filling.  Thus if we want all types to have \Id-types, and if we have $\Sigma$-types so that we can form $\id{A} = \sum_{x,y:A}\id{A}(x,y)$,  then we are led to take as the types those cubical sets that have $n$-box filling for \emph{all} $n$, i.e.\ the Kan complexes, and as the dependent types the Kan fibrations.  It seems remarkable that the constraints imposed by modelling \Id-types have led us directly to the definition of Kan complex and Kan fibration!
\end{remark}

\subsection{Uniformity}

We could now use Quillen's small object argument to make a WFS in which the \R-maps are the Kan fibrations, by taking as a generating set of left maps all the open box inclusions.  Since we want an \emph{algebraic} WFS, however, we shall instead use a refinement of the small object argument due to R.~Garner, and appropriate to an algebraic notion of fibration (see \cite{Garner}).

\begin{definition}  A \emph{uniform Kan complex} is a cubical set $X$ equipped with the following structure:
\begin{enumerate}
\item  For each open box $\sqcup^n \mono \I^n$, each $k\geq 1$, and each map $b: \I^k \times\sqcup^n\to X$, there is given an extension $\phi(b) : \I^k\times\I^n \to X$ of $b$ along the product map $\I^k\times\sqcup^n \mono \I^k\times\I^n$.
\begin{equation}\label{diagram:uniformsimple}
\xymatrix{
\I^k\times \sqcup^n\ar@{>->}[d] \ar[r]^-{b}  & X\\
 \I^k \times \I^n \ar@{..>}[ru]_{\phi(b)} &
}
\end{equation}
\item The chosen extensions $\phi(b)$ are natural in $\I^k$, in the sense that for each map of cubes $\alpha : \I^j \to \I^k$, one has
\[
\phi(b\circ(\alpha\times 1))\ =\ \phi(b)\circ (\alpha\times 1),
\]
as indicated in the following commutative diagram.
\begin{equation}\label{diagram:uniformnatural}
\xymatrix{
\I^j\times \sqcup^n \ar@{>->}[d]  \ar[rr]^{\alpha\times 1} 
	&& \I^k\times \sqcup^n\ar@{>->}[d] \ar[r]^-{b} 
		& X\\
 \I^k \times \I^n \ar@{..>}[urrr]|{\phi(b(\alpha\times 1))\ \ } \ar[rr]_{\alpha\times 1} 
 	&&
	 \I^j \times \I^n  \ar@{..>}[ru]_-{\phi(b)} 
		&
}
\end{equation}
\item The foregoing also holds for all lower open boxes $\sqcap^n \mono \I^n$.
\end{enumerate}
\end{definition}

The uniform Kan condition turns the box filling \emph{property} of an object $X$ into an explicitly given \emph{structure} $(X, \phi)$ on $X$, namely a natural choice of fillers $\phi(b)$ for all (generalized) open boxes $b : \I^k\times \sqcup^n \to X$.  The generalization to maps is straightforward:

\begin{definition}[cf.~\cite{BCH}]
A \emph{uniform Kan fibration} is a map $f : Y\to X$ equipped with the following structure: 
\begin{enumerate}

\item  For each open box $\sqcup^n \mono \I^n$, each $k\geq 1$, and each (outer) square of the form
\begin{equation}
\xymatrix{
\I^k \times \sqcup^n \ar@{>->}[d] \ar[r]^-{b} & Y \ar[d]^{f} \\
\I^k \times \I^n \ar[r]_-{a} \ar@{.>}[ru]|-{\phi(a,b)} & X
}
\end{equation}
with the product map $\I^k\times\sqcup^n \mono \I^k\times\I^n$ on the left, there is given a diagonal filler $\phi(a,b)$ as indicated.

\item These chosen fillers are natural in $\I^k$, in the sense that for any $\alpha : \I^j \to \I^k$, one has
\[
\phi(a,b)\circ(\alpha\times 1) = \phi(a\circ (\alpha\times 1), b\circ (\alpha\times 1))\,,
\]
as indicated below, in which $\phi' = \phi(a\circ(\alpha\times 1), b\circ(\alpha\times 1))$.
\begin{equation}\label{diagram:coherencefillers}
\xymatrix{
\I^j \times \sqcup^n \ar@{>->}[d] \ar[rr]^{\alpha\times 1} && \I^k \times \sqcup^n  \ar@{>->}[d] \ar[rr]^-{b} && \ar[d]^{f} T \\
\I^j \times \I^n \ar[rr]_{\alpha\times 1} \ar@{..>}[rrrru]|{\phi'\ } && \I^k \times \I^n \ar[rr]_-{a} \ar@{.>}[rru]|-{\phi(a,b)}  && X
}
\end{equation}

\item The foregoing also holds for all lower open boxes $\sqcap^n \mono \I^n$.

\end{enumerate}
\end{definition}

Clearly, $X$ is a uniform Kan complex just in case $X\to 1$ is a uniform Kan fibration

\begin{theorem}\label{thm:AWFS1}
There is an algebraic weak factorization system $(\LL,\RR)$ on $\cSet$ for which the $\RR$-algebras are exactly the uniform Kan fibrations.
\end{theorem}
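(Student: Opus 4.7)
The plan is to apply Richard Garner's algebraic version of the small object argument (\cite{Garner}) to a suitably chosen small category of generating cofibrations. Since $\cSet$ is a Grothendieck topos, hence locally finitely presentable, and all the box inclusions have finitely presentable domain and codomain, the algebraic small object argument converges and produces a free AWFS from any small category of generators.

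First, I would assemble the generating data into a small category $\mathcal{J}$, in contrast to the bare set used for Quillen's classical small object argument. The objects of $\mathcal{J}$ are the ``parametrised box inclusions''
\[
i_{n,k,e}: \I^k \times \sqcup^n_e \hookrightarrow \I^k \times \I^n
\]
for all $n \geq 1$, $k \geq 0$, and $e \in \{0,1\}$ (with the convention $\sqcup^n_1 = \sqcup^n$ and $\sqcup^n_0 = \sqcap^n$). A morphism $i_{n,j,e} \to i_{n,k,e}$ is a map $\alpha : \I^j \to \I^k$, regarded as the commutative square
\[
\xymatrix{
\I^j \times \sqcup^n_e \ar@{>->}[d]_{i_{n,j,e}} \ar[r]^{\alpha \times 1} & \I^k \times \sqcup^n_e \ar@{>->}[d]^{i_{n,k,e}} \\
\I^j \times \I^n \ar[r]_{\alpha \times 1} & \I^k \times \I^n
}
\]
in the arrow category $\cSet^\to$. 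These data assemble into a functor $J : \mathcal{J} \to \cSet^\to$.

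Next, I would invoke Garner's main theorem: given any small category $\mathcal{J}$ equipped with a functor $J$ to the arrow category of a locally presentable category $\mathcal{C}$, the algebraic small object argument produces an AWFS $(\LL, \RR)$ on $\mathcal{C}$ whose category of $\RR$-algebras is canonically equivalent to the category of pairs $(f, \phi)$, where $\phi$ is a coherent choice of diagonal fillers against every $J(j)$, $j \in \mathcal{J}$, subject to the naturality condition imposed by morphisms of $\mathcal{J}$.

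Unpacking this for our particular $\mathcal{J}$: an $\RR$-algebra structure on $f : Y \to X$ is precisely a family of fillers $\phi(a, b)$ for every square with $i_{n,k,e}$ on the left, satisfying
\[
\phi(a,b) \circ (\alpha \times 1) \ =\ \phi\bigl(a \circ (\alpha\times 1),\ b \circ (\alpha \times 1)\bigr)
\]
for every $\alpha : \I^j \to \I^k$, which is exactly the definition of a uniform Kan fibration. The main obstacle will be the careful verification that Garner's algebraic small object argument, assembled with this double-categorical input, yields $\RR$-algebras with the structure described above: the left-naturality built into morphisms of $\mathcal{J}$ must translate precisely into the uniformity condition, and the convergence of the transfinite construction must be justified in this presheaf topos. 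Once this identification is established, the theorem follows.
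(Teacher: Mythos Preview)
Your proposal is correct: setting up the parametrised box inclusions as a small category $\mathcal{J}$ (with morphisms only in the parameter direction) and invoking Garner's algebraic small object argument is exactly the black-box approach the paper itself endorses right after stating the theorem, and your identification of $\RR$-algebras with uniform Kan fibrations is the intended one.

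Where the paper differs is in execution. Rather than citing Garner's theorem and stopping, the paper unpacks the construction by hand: it transposes the uniform filling condition via Yoneda into a section of $X^{i^n}:X^{\I^n}\to X^{\sqcup^n}$, then packages this as an algebra for an explicitly defined pointed endofunctor $T$ built from the pushout of $\coprod_n X^{\sqcup^n}\times\sqcup^n \to \coprod_n X^{\sqcup^n}\times\I^n$ against evaluation. It then checks directly that $T$ preserves $\omega$-colimits (using that $X\mapsto X^\I$ has a right adjoint) and applies Kelly's free-monad construction to obtain $T_\infty$. This is more laborious than your route, but it buys something the paper needs later: an explicit description of the endofunctor that can be modified termwise. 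When the paper passes to \emph{normal} uniform Kan fibrations, it simply adds one more summand to the pushout defining $T$ to obtain $\dot{T}$, and the same argument goes through. Your black-box approach would require re-engineering $\mathcal{J}$ to encode the normality constraint, which is less transparent.
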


This is a direct application of Garner's small object argument \cite{Garner} (see also \cite{GS} for a related development). We will nonetheless sketch the details of the argument in the next section, because some modifications of it will be required thereafter.  

\section{The AWFS of uniform box filling}

In order to make an AWFS $(\LL,\RR)$ on $\cSet$ in which the $\RR$-algebras are the uniform Kan fibrations, we will first turn the description of uniform box filling structure on a map $f : X\to Y$ into that of an algebra structure for a suitable pointed endofunctor $T : \cSet/Y\to\cSet/Y$.  We then use a theorem of Kelly \cite{Kelly} to produce the algebraically free monad $T_\infty$ on $T$, which will be the $\RR$ part of the desired AWFS.  To simplify the exposition, we consider only the case where $Y=1$, i.e.\ Kan complexes rather than Kan fibrations, but the general case is entirely analogous.  Thus we are constructing the ``free uniform Kan complex'' on an arbitrary cubical set $X$, which may be regarded as a notion of $\infty$-groupoid.  The analogy to the construction of the free groupoid on a graph is a useful one to bear in mind.

Consider first a box filling structure $\phi$ on a cubical set $X$.  For each open $n$-box $i^n:\sqcup^n \mono \I^n$, each $k$, and for each (generalized) open box $b: \I^k \times\sqcup^n\to X$ in $X$, there is given an extension $\phi(b) : \I^k\times\I^n \to X$ to a $(k+n)$-cube in $X$, naturally in $\I^k$:
\[
\xymatrix{
\I^k\times \sqcup^n\ar@{>->}[d]_{1\times i^n\ } \ar[r]^-{b}  & X\\
 \I^k \times \I^n \ar@{..>}[ru]_{\phi(b)} &
}
\]
Transposing, we obtain an assignement $b \mapsto \phi(b)$ of the form:
\[
\xymatrix{
\I^k \ar[r]^{b}  \ar@{..>}[rd]_{\phi(b)}  & X^{ \sqcup^n}\\
 & \ar[u]_{X^{i^n}}  X^{ \I^n}\,,
}
\]
such that $X^{i^n}\circ\phi(b) = b$, again naturally in $\I^k$.

But since the cubes $\I^k$ are the representables, by Yoneda the assignment $\phi$ is given by composing with a unique natural transformation $\phi : X^{\sqcup^n}\to X^{\I^n}$, which is a section of $X^{i^n}$,
\[
\xymatrix{
  \ar@/{}_{1pc}/@{..>}[d]_{\phi} X^{ \sqcup^n}\\
  \ar[u]_{X^{i^n}}  X^{ \I^n}\,,
}
\]

Transposing by $\I^n$, we obtain the equivalent diagram:
\[
\xymatrix{
 X^{\sqcup^n}\times\sqcup^n \ar@{>->}[dd]_-{1\times i^n} \ar[rr]^-{\eval} && X \\
&& \\
 X^{\sqcup^n}\times\I^n \ar@{..>}[uurr]_{\phi} &&
}
\]

Finally, forming the pushout of the span formed by $1\times i^n$ and the evaluation map $\eval^n$,
\begin{equation}\label{diagram:defTn}
\xymatrix{
 X^{\sqcup^n}\times\sqcup^n \ar@{>->}[dd]_{1\times i^n} \ar[rr]^-{\eval^n} && X \ar[dd]^{t^n_X} \ar[r]^{=} & X\\
&&& \\
 X^{\sqcup^n}\times\I^n \ar[rr] && {\pocorner} T^n(X) \ar@{..>}[uur]_{\phi} &
}
\end{equation}
we obtain an object $T^n(X)$ with a map $t^n_X : X\to\ T^n(X)$ such that retractions $\phi$ of $t^n_X$ correspond bijectively to $n$-box filling structures on $X$.  

Since $T^n$ is clearly functorial in $X$, we have an endofunctor $$T^n : \cSet\to\cSet$$ with an (evidently natural) point $t^n : 1 \to T^n$.  Summing over all dimensions $n$, and again forming the pushout, 
\begin{equation}\label{diagram:defT}
\xymatrix{
{\coprod_{n}}X^{\sqcup^n}\times\sqcup^n \ar@{>->}[dd]_{{\coprod_{n}}1\times i^n\ } \ar[rr]^-{[\eval^n]} && X \ar[dd]^{t_X} \ar[r]^{=} & X\\
&&& \\
{\coprod_{n}}X^{\sqcup^n}\times\I^n \ar[rr] && {\pocorner} T(X) \ar@{..>}[uur]_{\phi} &
}
\end{equation}
we obtain an endofunctor $T : \cSet \to \cSet$ with a point $t : 1\to T$, 
the algebras for which are box filling structures on $X$, but now for all dimensions $n$.  It is easy to see that $(T,t)$-algebra homomorphisms  $h : (X,\phi) \to (Y, \psi)$ correspond exactly to maps $h : X\to Y$ that preserve the box filling structures, in the obvious sense.  Summarizing, we have shown:
\begin{proposition}
The category of $(T,t)$-algebras for the pointed endofunctor $$T : \cSet \to \cSet$$ defined by \eqref{diagram:defT} is isomorphic to the category of uniform Kan complexes.
\end{proposition}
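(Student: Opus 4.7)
My plan is to read the construction of $T$ backwards. The pushout diagram \eqref{diagram:defT} is designed precisely so that giving a $(T,t)$-algebra structure $\phi : T(X) \to X$ with $\phi \circ t_X = 1_X$ is, by the universal property of the pushout coupled with the retraction condition, equivalent to giving a map $\coprod_n X^{\sqcup^n}\times\I^n \to X$ whose restriction along $\coprod_n 1\times i^n$ equals the coproduct of evaluation maps $[\mathrm{eval}^n]$.

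First, I would split the coproduct and observe that such a map is equivalent, for each $n$ (and for each choice of upper/lower box), to a map $X^{\sqcup^n}\times\I^n \to X$ extending the evaluation $X^{\sqcup^n}\times\sqcup^n \to X$. Second, I would transpose under the cartesian-closed structure of $\cSet$: such an extension corresponds bijectively to a map $\phi_n : X^{\sqcup^n}\to X^{\I^n}$ that is a section of $X^{i^n} : X^{\I^n}\to X^{\sqcup^n}$. Third, I would apply the Yoneda lemma: since every $\I^k$ is representable, such a natural transformation $\phi_n$ is the same as, for every $k\geq 1$ and every $b : \I^k \to X^{\sqcup^n}$, a filler $\phi_n(b) : \I^k \to X^{\I^n}$ with $X^{i^n}\circ\phi_n(b) = b$, with the assignment $b\mapsto \phi_n(b)$ natural in $\I^k$. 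Transposing once more along $\I^k$, this is exactly the data required in diagrams \eqref{diagram:uniformsimple} and \eqref{diagram:uniformnatural} of the definition of a uniform Kan complex. The naturality condition \eqref{diagram:uniformnatural} is precisely the naturality of $\phi_n$ as a natural transformation between presheaves, which comes for free.

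For the morphism part, a $(T,t)$-algebra homomorphism $h : (X,\phi)\to (Y,\psi)$ is a map $h : X\to Y$ satisfying $h\circ\phi = \psi\circ T(h)$. Unwinding $T(h)$ via its pushout definition, and using the coproduct/adjunction/Yoneda translations above, this equation becomes: for every open box $b : \I^k\times\sqcup^n \to X$, one has $h\circ\phi_n(b) = \psi_n(h\circ b)$, which is exactly the condition that $h$ preserves the chosen box fillers. Thus the category of $(T,t)$-algebras is isomorphic to the category of uniform Kan complexes and structure-preserving maps.

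The only mild obstacle is bookkeeping: keeping track of the pushout universal property together with the retraction constraint $\phi\circ t_X = 1_X$, and separately handling the upper and lower boxes by noting that the coproduct in \eqref{diagram:defT} ranges over both families $i^n : \sqcup^n\mono \I^n$ and $\sqcap^n\mono\I^n$, so that the three clauses of the uniform Kan definition are each picked up by the construction. No deep calculation is required; the argument is just a sequence of universal-property translations, the construction having been engineered exactly for this equivalence.
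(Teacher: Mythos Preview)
Your proposal is correct and is essentially the same argument the paper gives: in the text the proposition is stated as a summary (``Summarizing, we have shown:''), and the preceding discussion builds $T$ by exactly the chain of translations you invert---transpose to sections $X^{\sqcup^n}\to X^{\I^n}$ of $X^{i^n}$, invoke Yoneda over the representables $\I^k$, transpose back, and form the pushout---with the homomorphism claim dispatched in the same one-line way. The only cosmetic difference is direction: the paper goes from uniform Kan structure to $(T,t)$-algebra, you go the other way.
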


Next we use a method due to Kelly (see \cite{Garner,Kelly}) to construct the \emph{algebraically free monad} $(T_\infty, t_\infty, \mu)$ from the pointed endofunctor $(T,t)$.  This monad comes with a natural map $\eta : T\to T_\infty$ such that $t_\infty = \eta\circ t$, inducing a comparison functor, $$\eta^* : (T_\infty, t_\infty, \mu)\mbox{-}\mathrm{Alg} \to (T,t)\mbox{-}\mathrm{Alg}\,,$$ which is an isomorphism of categories.  Here $(T_\infty, t_\infty, \mu)\mbox{-}\mathrm{Alg}$ is the category of algebras for the monad, while $(T,t)\mbox{-}\mathrm{Alg}$ is the category of algebras for the pointed endofunctor.  We henceforth refer to these more briefly as $T_\infty\mbox{-}\mathrm{Alg}$ and $T\mbox{-}\mathrm{Alg}$ respectively.

\begin{lemma}\label{lem:compact}
The endofunctor $T : \cSet \to \cSet$ preserves $\omega$-colimits.
\end{lemma}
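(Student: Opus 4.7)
The strategy is to reduce the claim for $T$ to the preservation of $\omega$-colimits by each constituent of the pushout diagram \eqref{diagram:defT}, and then to verify those preservation properties using the cartesian closed structure of $\cSet$ together with the fact that $\sqcup^n$ is a finite colimit of representables. Since $\omega$-colimits in $\cSet = \psh{\C_\times}$ are computed pointwise, and since pushouts and coproducts are themselves colimits (so commute with $\omega$-colimits by the Fubini principle for colimits), it suffices to show that for every $n\geq 1$ the functors $X\mapsto X^{\sqcup^n}\times\sqcup^n$ and $X\mapsto X^{\sqcup^n}\times\I^n$ preserve $\omega$-colimits in $X$.

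The outer multiplications $(-)\times\sqcup^n$ and $(-)\times\I^n$ are left adjoints in the cartesian closed category $\cSet$ (with right adjoints $(-)^{\sqcup^n}$ and $(-)^{\I^n}$), and hence preserve all colimits. The entire question therefore reduces to showing that the exponential $X\mapsto X^{\sqcup^n}$ preserves $\omega$-colimits. For this I would use that the open box $\sqcup^n \hookrightarrow \I^n$ is, by construction, a finite colimit of representables (the union of the face inclusions $\alpha_i^d : \I^{n-1}\mono\I^n$, presented as an iterated finite pushout). By Yoneda,
\[
(X^{\sqcup^n})_m\ \cong\ \hom(\I^m\times \sqcup^n,\, X),
\]
and $\I^m\times(-)$ preserves colimits (again by cartesian closure), so $\I^m\times\sqcup^n$ is itself a finite colimit of representables $\I^{m+k}$. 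Consequently $\hom(\I^m\times\sqcup^n,-)$ is a finite limit of evaluation functors $(-)_{m+k}$, each of which preserves all colimits of sets; and a finite limit of $\omega$-cocontinuous $\Set$-valued functors is again $\omega$-cocontinuous, since finite limits commute with filtered colimits in $\Set$. Pointwise computation of colimits in $\cSet$ then yields that $X\mapsto X^{\sqcup^n}$ preserves $\omega$-colimits, completing the reduction.

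I do not anticipate a substantive obstacle. The only delicate point is the finite presentability of $\sqcup^n$, i.e.\ recognizing that it assembles as a finite colimit of representables, so that exponentiation by it commutes with filtered (and in particular $\omega$-) colimits. With that in hand, the full preservation property of $T$ follows by assembling these ingredients through the pushout that defines it.
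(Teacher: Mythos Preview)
Your proposal is correct and takes essentially the same approach as the paper: both argue that $X\mapsto X^{\sqcup^n}$ preserves filtered colimits because $\sqcup^n$ is a finite colimit of cubes, so that $X^{\sqcup^n}$ is a finite limit of colimit-preserving functors, and then assemble $T$ as a pushout of $\omega$-cocontinuous pieces. The only cosmetic difference is that the paper invokes the right adjoint to $X^\I$ (Corollary~\ref{cor:rightadjpath}) to conclude that each $X^{\I^k}$ preserves all colimits, whereas you arrive at the same conclusion pointwise via Yoneda and evaluation functors.
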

\begin{proof}
The pathobject functor $X^\I$ preserves all colimits, since it has a right adjoint by Corollary \ref{cor:rightadjpath}; hence so do all the functors $X^{\I^n}$.  Since the open boxes $\sqcup^{n}$ are finite colimits of $(n-1)$-cubes, the functors $X^{\sqcup^{n}}$ are finite limits of functors of the form $X^{\I^{n-1}}$, and therefore preserve filtered colimits.  Therefore $T$ is a pushout of functors that preserve $\omega$-colimits.
\end{proof}

\begin{proposition}
There is a monad $\Tinf$ on $\cSet$ with a map of pointed endofunctors $T\to T_\infty$ inducing an isomorphism,
\[
T_\infty\mbox{-}\mathrm{Alg}\ \cong\ T\mbox{-}\mathrm{Alg}\,.
\]  
\end{proposition}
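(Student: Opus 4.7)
The plan is to construct $T_\infty$ as a sequential $\omega$-colimit of iterates of $T$, directly exploiting the $\omega$-cocontinuity established in Lemma~\ref{lem:compact}. First, I would set $T^{(0)} = \mathrm{Id}$ and $T^{(n+1)} = T \circ T^{(n)}$, with transition natural transformations $\iota_n = t_{T^{(n)}} : T^{(n)} \to T T^{(n)} = T^{(n+1)}$ given by whiskering the point $t : 1 \to T$. Then define
\[
T_\infty \ :=\ \mathrm{colim}_{n < \omega}\, T^{(n)}.
\]
The desired map of pointed endofunctors $\eta : T \to T_\infty$ is the coprojection at stage $1$, and the unit $t_\infty : 1 \to T_\infty$ is the coprojection at stage $0$; by construction $t_\infty = \eta \circ t$.

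Next, the multiplication $\mu : T T_\infty \to T_\infty$ comes directly from Lemma~\ref{lem:compact}: since $T$ preserves $\omega$-colimits,
\[
T T_\infty \ \cong\ T\bigl(\mathrm{colim}_n T^{(n)}\bigr) \ \cong\ \mathrm{colim}_n T^{(n+1)} \ \cong\ T_\infty,
\]
the last step by cofinality of the shifted $\omega$-chain. This canonical isomorphism serves as $\mu$, and the monad laws reduce to identities among coprojections of the colimit, which can be verified by standard diagram chases using its universal property.

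Finally, I would show that the induced functor $\eta^* : T_\infty\mbox{-}\mathrm{Alg} \to T\mbox{-}\mathrm{Alg}$ is an isomorphism of categories. In one direction, a monad algebra $\alpha : T_\infty X \to X$ yields the $T$-algebra $\alpha \circ \eta_X : TX \to X$, which is a retraction of $t_X$ by the unit law. For the inverse, given a pointed-endofunctor algebra $\phi : TX \to X$ with $\phi \circ t_X = \mathrm{id}_X$, I inductively define $\phi_n : T^{(n)} X \to X$ by $\phi_0 = \mathrm{id}_X$ and $\phi_{n+1} = \phi \circ T\phi_n$; naturality of $t$ together with the retraction property ensures that $\phi_{n+1} \circ \iota_n = \phi_n$, so the $\phi_n$ assemble via the universal property of the colimit into a map $\phi_\infty : T_\infty X \to X$. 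The main obstacle will be checking that this inverse assignment is functorial in $\phi$ and that the resulting $\phi_\infty$ satisfies monad-algebra associativity; but this is precisely what Kelly's theorem on algebraically free monads~\cite{Kelly} delivers, once the $\omega$-accessibility of $T$ guaranteed by Lemma~\ref{lem:compact} is supplied. The cleanest write-up is therefore to verify the hypotheses of Kelly's theorem and invoke the conclusion.
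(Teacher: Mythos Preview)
Your fallback to Kelly's theorem is correct and is essentially what the paper does, but the explicit construction you propose before that has a genuine gap, and it is \emph{not} Kelly's construction. The problem is the step
\[
T T_\infty\ \cong\ T\bigl(\mathrm{colim}_n\, T^{(n)}\bigr)\ \cong\ \mathrm{colim}_n\, T^{(n+1)}\ \cong\ T_\infty
\]
``by cofinality of the shifted $\omega$-chain''. After applying $T$, the transition maps in $\mathrm{colim}_n T^{(n+1)}$ become $T\iota_n = T(t_{T^{(n)}})$, whereas the shifted original chain has transitions $\iota_{n+1} = t_{T^{(n+1)}}$. These two parallel maps $T^{(n+1)}\to T^{(n+2)}$ agree only when $(T,t)$ is \emph{well-pointed}, i.e.\ when $Tt = tT : T\to T^2$, which is neither assumed nor true for the box-filling endofunctor of~\eqref{diagram:defT}. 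A small counterexample already in $\Set$: take $TX = X+1$ with $t_X$ the inclusion. Then $T$-algebras are pointed sets and the free one on $X$ is $X+1$, but your colimit yields $X+\N$, whose retractions are $\N$-indexed families of points rather than single points. So your $\mu$ (which in any case should be a map $T_\infty T_\infty \to T_\infty$, not $TT_\infty\to T_\infty$) is not well defined by cofinality, and the assignment $\phi\mapsto\phi_\infty$ is not a bijection onto monad algebras.

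This is exactly why Kelly's construction---which the paper sketches---interleaves a coequalizer at each stage, coequalizing (the transports of) the pair $Tt$ and $tT$ before iterating; only after these identifications does the $\omega$-colimit become a fixed point of $T$ and hence the algebraically free monad. Your concluding sentence is therefore the right move: verify the hypothesis via Lemma~\ref{lem:compact} and invoke Kelly, but the chain whose colimit you then take is the one with coequalizers, not the naive chain of iterates $T^{(n)}$.
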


\begin{proof}(sketch, cf.\ \cite{Garner})
To construct $\Tinf(X)$ for any cubical set $X$,  begin with $t_X :X\to TX$ and  consider 
\[
\xymatrix{
TX  \ar@<.8ex>[r]^{Tt_X}  \ar@<-.8ex>[r]_{t_{TX}} & T^2X\,,
}
\]
which need not agree.  Take the coequalizer $c$ to obtain a new object, called $T_2X$,
\[
\xymatrix{
TX  \ar@<.8ex>[r]^{Tt_X}  \ar@<-.8ex>[r]_{t_{TX}} & T^2X \ar[r]^c & T_2X\,.
}
\]
Now let $T_1X = TX$ and $t_1 = t : X\to T_1X$ and $c_1 = c : T^2X\to T_2X$, and continue as follows:
\[
\xymatrix@=3em{
TX  \ar@<.8ex>[r]^{Tt_X}  \ar@<-.8ex>[r]_{t_{TX}} & T^2X \ar[rd]^{c_1} \ar[r]^{Tt_1} & TT_2X \ar[rd]^{c_2} \ar[r]^{Tt_2} & TT_3X \ar[r] & \dots\\
X \ar[u]^{t} \ar[r]_{t_1} & T_1X \ar[u]_{t_{T_1}} \ar[r]_{t_2} & T_2X \ar[u]_{t_{T_2}} \ar[r]_{t_3} & T_3X \ar[u]_{t_{T_3}} \ar[r]_{t_3} & \dots
}
\]
At each successive step, $t_{n+1} := c_n\circ t_{T_n}$ and $c_{n+1} := \mathrm{coeq}(Tt_n, t_{T_{n+1}}\circ c_n)$.

Finally, set $\Tinf(X) := \varinjlim_{n}T_n(X)$ and let $(t_\infty)_X: X\to \Tinf(X)$ be the canonical map.  It follows from the foregoing lemma that $t : \Tinf(X) \cong T(\Tinf(X))$, and so $\Tinf$ is the (algebraically) free monad on $T$.
\end{proof}

\begin{remark}\label{rem:AWFS1}
This essentially completes the proof of Theorem \ref{thm:AWFS1}: there is an AWFS $(\LL,\RR)$ on $\cSet$ in which the $\RR$-algebras are the uniform Kan fibrations.  For an object $X$, a uniform Kan structure $\phi$ on $X$ is the same thing as a $T$-algebra structure map $\phi : TX\to X$, and by the foregoing, these correspond precisely to $\Tinf$-algebra structures $\phi:\Tinf(X) \to X$.  Moreover, the free $T$-algebra on $X$ is simply $\Tinf(X)$, with the isomorphism $t^{-1} : T\Tinf(X) \cong \Tinf(X)$ as its $T$-structure map.

More generaly, for any map $f:Y\to X$ of cubical sets, we have that  $\RR(f) = \Tinf(f) : \tilde{Y} \to X$ is the ``free uniform Kan fibration'' on $f$, as indicated in
\begin{equation}
\xymatrix{
Y \ar[rrd]_{f} \ar[rr]^{(t_\infty)_f} && \ar[d]^{\Tinf(f)} \tilde{Y} \\
&& X
}
\end{equation}
while $\LL(f) = (t_\infty)_f : Y \to \tilde{Y}$ is the unit at $f$ of the $\Tinf$ monad.  The object $E(f) = \tilde{Y}$ is simply the domain of $\RR(f)$, constructed as a colimit.
\end{remark}

\subsection{Connections}

For the model of \Id-types, we need the following fact, which is easily checked along the lines of Proposition \ref{prop:pathliftboxfill}.

\begin{lemma}
If $A$ is a uniform Kan complex, then the canonical map from the pathobject $A^\I \to A\times A$ is a uniform Kan fibration.
\end{lemma}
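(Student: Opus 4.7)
The plan is to lift the property-level equivalence of Proposition \ref{prop:main} to an equivalence of \emph{structures}, using exactly the exponential transposition that proved Proposition \ref{prop:pathliftboxfill} in the $n=1$ case. Given a generalized open $n$-box filling problem for the canonical map $A^\I \to A\times A$,
\[
\xymatrix@=3em{
\I^k \times \sqcup^n \ar@{>->}[d] \ar[r]^-{b} & A^\I \ar[d] \\
\I^k \times \I^n \ar[r]_-{a} & A\times A\,,
}
\]
I use the identification $A\times A \cong A^{\partial\I}$ together with the adjunction $(-)\times\I \dashv A^{(-)}$ to transpose this outer square across the $\I$ in $A^\I$. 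Commutativity of the square transposes to the condition that the two resulting maps agree on the intersection $\I^k\times\sqcup^n\times\partial\I$, so they assemble into a single map out of the pushout
\[
P \;=\; (\I^k\times\sqcup^n\times\I)\ \cup_{\I^k\times\sqcup^n\times\partial\I}\ (\I^k\times\I^n\times\partial\I) \;\longrightarrow\; A\,,
\]
and the sought diagonal filler correspondingly transposes to an extension of this map to $\I^k\times\I^{n+1}$.

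Since products distribute over pushouts in $\cSet$, and since the open $(n+1)$-box admits the decomposition $\sqcup^{n+1}\cong \sqcup^n\times\I\cup_{\sqcup^n\times\partial\I}\I^n\times\partial\I$ (generalizing the pushout used in diagram \eqref{diagram:boxfillingrev}, with the open direction in the new $(n+1)$-box inherited from the open direction of $\sqcup^n$), the pushout $P$ is canonically isomorphic to $\I^k\times\sqcup^{n+1}$. Thus the transposed problem is precisely a generalized open $(n+1)$-box in $A$, to which I apply the given uniform Kan structure $\phi$ on $A$ to obtain a natural extension $\I^k\times\I^{n+1}\to A$. Transposing back across the same adjunction yields the desired diagonal filler $\I^k\times\I^n\to A^\I$: commutativity of its top triangle comes from the $\sqcup^n\times\I$ component of the pushout, and of the bottom triangle (lying over $a$) from the $\I^n\times\partial\I$ component.

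Naturality in $\I^k$ for the induced filler transfers directly from naturality in $\I^k$ of $\phi$, since the exponential adjunction, the pushout formula, and the identification $P\cong\I^k\times\sqcup^{n+1}$ are all natural in $\I^k$. The lower-open-box case is handled by the same argument with $\sqcap^n$ and $\sqcap^{n+1}$ in place of $\sqcup^n$ and $\sqcup^{n+1}$. The only delicate point, which is really bookkeeping, is the geometric verification that the pushout reassembles into $\I^k\times\sqcup^{n+1}$ with the open direction correctly tracked; once that is in hand, everything else is a routine exercise in adjoint transposition applied to the hypothesized uniform structure on $A$.
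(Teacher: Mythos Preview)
Your proposal is correct and is exactly the approach the paper indicates: the paper does not spell out a proof but says the lemma ``is easily checked along the lines of Proposition~\ref{prop:pathliftboxfill},'' and you have carried out precisely that check---transposing the generalized $n$-box problem for $A^\I\to A\times A$ across the $\I$-exponential to a generalized $(n{+}1)$-box problem for $A$ (this is the structured version of Proposition~\ref{prop:main}), applying the given uniform structure $\phi$, and transposing back, with naturality in $\I^k$ inherited from that of $\phi$.
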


Now let $A$ be (uniform) Kan and consider the canonical pathobject factorization $A\stackrel{r}{\to} A^\I \stackrel{p}{\to} A\times A$ of the diagonal.  By the foregoing lemma, the second factor $p : A^\I \to A\times A$ is also Kan, and so we just need the ``constant path'' map $r : A \to A^\I$ to be an $\LL$-map.  Specifically, it should have a coalgebra structure for the copointed endofunctor
\[
\LL : A/\cSet \longrightarrow A/\cSet\,,
\]
which takes maps $f : A\to X$ to their first factors $\LL(f): A \to E(f)$.  According to the construction of the AWFS just given (see Remark \ref{rem:AWFS1}), an $\LL$-coalgebra structure map for $r:A\to A^\I$ is therefore a diagonal filler for the following square.
\begin{equation}\label{diagram:reflL}
\xymatrix{
A \ar[d]_{r} \ar[r]^{\LL(r)} & \ar[d]^{\RR(r)} E(r) \\
A^\I \ar@{..>}[ru] \ar[r]_{=} & A^\I
}
\end{equation}
where $\RR(r):E(r) \to A^\I$ is the free fibration on $r : A \to A^\I$.  Since $\RR(r)$ is an $\RR$-algebra, this is exactly an instance of \Id-elimination, as in \eqref{diag:idelim}.

\begin{proposition}\label{prop:liftingnotnormal}
Let $A$ be a Kan complex, $\pi : B \to A^\I$ a Kan fibration, and $b : A\to B$ a map making the following outer square commute.
\begin{equation}\label{diagram:IdElim}
\xymatrix{
A \ar[d]_{r} \ar[r]^{b} & \ar[d]^{\pi} B \\
A^\I \ar@{..>}[ru]_{j} \ar[r]_{=} & A^\I
}
\end{equation}
Then there is a diagonal map $j$ as indicated making the lower triangle commute, $\pi\circ j = 1$.
\end{proposition}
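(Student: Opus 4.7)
My plan is to reduce the section-of-$\pi$ problem to a single application of path lifting for $\pi$. The idea: given any $p\in A^\I$ with $p(0)=a_0$, I would first exhibit a natural ``connecting'' path in $A^\I$ from the constant path $r(a_0)$ to $p$ itself. Since $b(a_0)$ lies over $r(a_0)$, lifting this path along $\pi$ starting from $b(a_0)$ and evaluating at the endpoint $1$ then produces the desired element $j(p)\in B$ lying over $p$. Because cartesian cubical sets lack primitive connections (there are no maps $\I^2\to\I$ other than projections and constants), this connecting path must itself be built by box filling in $A$.

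To construct it, write $e_0\colon A^\I\to A$ for endpoint evaluation at $0$. I want a 2-cube $H_p\colon\I^2\to A$, natural in $p\in A^\I$, whose three specified faces are
\begin{equation*}
H_p(x,0)=e_0(p),\qquad H_p(0,y)=e_0(p),\qquad H_p(x,1)=p(x).
\end{equation*}
These three faces form an upper open $2$-box $\sqcup^2\hookrightarrow\I^2$ (missing the face at $x=1$), and they are compatible on overlaps because $p(0)=e_0(p)$. Parametrizing over $A^\I$, this specification is precisely a map $A^\I\times\sqcup^2\to A$, and the uniform Kan structure on $A$ produces a filler $H\colon A^\I\times\I^2\to A$. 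Its adjoint transpose in the last coordinate gives a map $\check H\colon A^\I\times\I\to A^\I$ with $\check H(p,0)=r(e_0(p))$ and $\check H(p,1)=p$.

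For the second step, since $\pi\circ b\circ e_0=r\circ e_0=\check H(-,0)$, the square
\begin{equation*}
\xymatrix{
A^\I\times\{0\}\ar@{>->}[d]\ar[r]^-{b\circ e_0} & B\ar[d]^{\pi}\\
A^\I\times\I\ar[r]_-{\check H} & A^\I
}
\end{equation*}
commutes. Its left vertical is the open $1$-box inclusion $\sqcup^1\hookrightarrow\I$ parametrized by $A^\I$, so since $\pi$ is a uniform Kan fibration the square admits a diagonal filler $\tilde H\colon A^\I\times\I\to B$. Setting $j(p):=\tilde H(p,1)$ then yields $\pi\circ j(p)=\check H(p,1)=p$, as required.

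The main obstacle is that the definitions of uniform Kan complex and Kan fibration in the paper take only cube parameters $\I^k$, whereas both applications above use the non-representable parameter $A^\I$; this extension is standard, since $A^\I$ is a colimit of cubes and uniformity of the chosen fillers is precisely what guarantees they glue across such colimits. The construction also gives no control over $j\circ r$: the filled face $H_p(1,y)$ is essentially arbitrary, so in general $j\circ r\ne b$. This is exactly the ``not normal'' aspect reflected in the proposition's label, indicating that without connections the cartesian cubical model validates only a propositional form of the $\mathsf{J}$ computation rule.
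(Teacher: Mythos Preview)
Your proof is correct and takes essentially the same approach as the paper: build a connection-like 2-cube in $A$ by box filling (the paper packages this step as a separate lemma, ``every Kan complex has a connection''), transpose it to a path $r(e_0(p))\to p$ in $A^\I$, and then define $j(p)$ as the transport of $b(e_0(p))$ along that path using path-lifting for $\pi$. Your observation about extending from cube parameters $\I^k$ to the parameter $A^\I$ is exactly what the paper handles by working with $n$-cubes ``naturally in $n$''.
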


For the proof we shall require the following:

\begin{definition}
A \emph{connection} on a cubical set $X$ is a map
\[
c : X^\I \to X^{\I\times\I}
\]
such that for each $n$-cube $(a: a_0\to a_1)$ in $X^\I$, the $(n+1)$-cube $c(a)$ has the form:
\begin{equation}\label{diagram:connection}
\xymatrix{
a_0 \ar[d]_{ra_0} \ar[r]^{ra_0} & \ar[d]^{a} a_0 \\
a_0 \ar@{}[ru]|{c(a)} \ar[r]_{a'} & a_1
}
\end{equation}
where $r : X\to X^\I$ is the degeneracy (i.e.\ the ``constant path'').

The connection is called \emph{strict} if in the above we always have $a'=a$.  It is called \emph{normal} if $c(r(x)) = r(r(x))$ for all $n$-cubes $x$ in $X$.
\end{definition}

\begin{lemma}
Every Kan complex  $A$ has a connection $c : A^\I \to A^{\I\times\I}$.
\end{lemma}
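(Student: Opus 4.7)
My plan is to construct $c$ by reducing to a single uniform Kan filling problem and invoking the Yoneda lemma. By the usual exponential adjunctions, a morphism of cubical sets $c \colon A^\I \to A^{\I \times \I}$ amounts to an assignment, natural in $n$, taking an $(n+1)$-cube $a \colon \I^n \times \I \to A$ (with the last coordinate $t$ viewed as the ``path direction'') to an $(n+2)$-cube $c(a) \colon \I^n \times \I_s \times \I_t \to A$, where the two new coordinates $(s,t)$ constitute the $\I \times \I$ factor.

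First I would assemble the three prescribed faces into a generalized open box. Arrange coordinates so that the open $2$-box $\sqcup^2 \mono \I_s \times \I_t$ is missing the $t=1$ face, and let $a_0 = a(-,0)$ and $a_1 = a(-,1) \colon \I^n \to A$. Define $b(a) \colon \I^n \times \sqcup^2 \to A$ piecewise: on the $s=0$ face, take $(x,t) \mapsto a_0(x)$; on the $t=0$ face, take $(x,s) \mapsto a_0(x)$; and on the $s=1$ face, take $(x,t) \mapsto a(x,t)$. These three pieces agree on both shared edges (each equal to $a_0$), so $b(a)$ is well-defined on the pushout $\sqcup^2$. The construction is manifestly natural in the $\I^n$ factor: for any $\alpha \colon \I^m \to \I^n$ one has $b(a \circ (\alpha \times 1_\I)) = b(a) \circ (\alpha \times 1_{\sqcup^2})$.

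Next I would apply the uniform Kan filling structure of $A$ to the generalized open box $b(a)$, obtaining a filler $c(a) \colon \I^n \times \I^2 \to A$. The uniformity axiom in the $\I^n$ factor, diagram~\eqref{diagram:uniformnatural}, combined with the naturality of $b(-)$ just noted, gives that $a \mapsto c(a)$ is natural in $n$; by Yoneda this packages into the sought morphism $c \colon A^\I \to A^{\I \times \I}$. One then reads the boundary directly off the construction: the $s=0$, $s=1$, and $t=0$ faces of $c(a)$ are the constant path $r(a_0)$, the path $a$ itself, and the constant at $a_0$ in the $s$ direction respectively, exactly as required in diagram~\eqref{diagram:connection}. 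The remaining $t=1$ face is some path $a'$ from $a_0$ to $a_1$, with no reason to equal $a$ or to commute with degeneracies, so $c$ need be neither strict nor normal. The real content of the proof is that $c$ is a \emph{map of cubical sets}, and this is precisely what the uniformity axiom for Kan fillers was introduced to deliver; in its absence the per-cube fillers would not glue into a natural transformation.
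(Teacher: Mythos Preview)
Your proof is correct and is exactly the argument the paper's one-line proof ``Use the box filling in $A$'' is abbreviating: set up the generalized open $2$-box with the three prescribed faces and fill it, using uniformity in the $\I^n$ factor to ensure the result is a map of cubical sets. You have also made explicit the point (only implicit in the paper) that uniformity is what makes the fillers assemble into a natural transformation, and that the resulting connection need not be strict or normal.
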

\begin{proof}
Use the box filling in $A$.
\end{proof}

\begin{proof} (Proposition)
We shall use the uniform Kan structure on $A$ and $\pi:B\to A^\I$ to construct a section $j:A^\I \to B$ of $\pi$.  Since we know that  $\pi$ has path-lifting, it will suffice to have, for every $n$-box $a : a_0 \to a_1$ in $\pA$, a path of the form $c(a) : r(a_0) \to a$ (naturally in $n$).  For then we can take $b(a_0)$ as a lift of $r(a_0)$ to get $j(a)$ as the transport
\[
j(a) := c(a)_*(b(a_0))
\]
of $b(a_0)$ along the path $c(a)$.  We then have $\pi(j(a)) = \pi(c(a)_*(b(a_0))) = a$ by the definition of transport.  

The required path $c(a) : r(a_0) \to a$, for any $a$, is provided by the connection on $A$, as given by the lemma.
\end{proof}

Now consider the top triangle in \eqref{diagram:IdElim}, i.e.\ $jr=b$, as is required in order to have a diagonal filler in \eqref{diagram:reflL}.  By definition, we have $j(a) := c(a)_*(b(a_0))$, and so $j(rx) = c(rx)_*(b(x))$.  Thus we will have $jr=b$ if the following two conditions are satisfied:
\begin{enumerate}
\item the connection $c$ is strict: $c(rx) = r(rx)$,
\item the transport in $\pi : B \to \pA$ along any degenerate path $r(x) : x\to x$ is trivial: $r(x)_* = 1$.
\end{enumerate}

Both of these conditions will follow from the following strengthening of the notion of a uniform Kan fibration.

\begin{definition}
A uniform Kan fibration $f:B\to A$, with structure $\phi$, is \emph{normal} if the filler assigned by $\phi$ to a degenerate open box is always degenerate.  Thus e.g.\  in the following outer commutative square,
\[
\xymatrix{
\sqcup^{n+1} \ar[dd]_{i\ }\ar[rr]^-{b} \ar[rd]^{\pi i} && B \ar[dd]^{f} \\
& \I^n \ar[ru]^{c} & \\
\I^{n+1} \ar[ru]^\pi \ar[rr]_a \ar@/{}_{1.5pc}/@{..>}[rruu]_{\phi(a,b)} && A
}
\]
if the open $(n+1)$-box $b$ is degenerate, $b=c\pi i$ for some $n$-cube $c$ in $B$ and projection $\pi: \I^{n+1} \to \I^{n}$ \emph{in the direction in which $\sqcup^{n+1}$ is open},  then the filler $\phi(a,b)$ is also degenerate, and specifically $\phi(a,b) = c\pi$.  

Generally, the assignment $\phi(a,b)$ must satisfy the following condition: for any generalized open box $i : \I^k\times\sqcup^{n+1}\to \I^k\times\I^{n+1}$ and any $(k+n)$-cube $c : \I^k\times \I^n \to B$ we have:
\[
\phi(fc\pi, c\pi i) = c\pi\,,
\]
where $\pi: \I^{n+1} \to \I^{n}$ is the projection in the direction in which $\sqcup^{n+1}$ is open, all as indicated below.
\[
\xymatrix{
\I^k\times\sqcup^{n+1} \ar[dd]_{i\ } \ar[rr] && B \ar[dd]^{f} \\
& \I^k\times\I^n \ar[ru]^{c} & \\
\I^k\times\I^{n+1} \ar[ru]^\pi \ar[rr] \ar@/{}_{2pc}/@{..>}[rruu]_{\phi} && A\,.
}
\]
\end{definition}

\begin{lemma}\label{lem:normalpath}
If $A$ is a normal Kan complex, then $\pA \to A\times A$ is a normal Kan fibration.
\end{lemma}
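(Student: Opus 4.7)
The plan is to adapt the argument of the earlier (non-normal) lemma that $A^\I \to A\times A$ is a uniform Kan fibration whenever $A$ is a uniform Kan complex, and to verify that the induced fibration structure inherits normality from $A$. Recall that the earlier argument proceeds by exponential transpose along $(-)\times\I \dashv (-)^\I$: a lifting problem of a generalized open $n$-box against $A^\I \to A\times A = A^{\partial\I}$ transposes to a box-filling problem for $A$ against the $(n+1)$-box inclusion $\I^k \times \sqcup^{n+1} \hookrightarrow \I^k \times \I^{n+1}$, using the decomposition $\sqcup^{n+1} = \sqcup^n\times\I \cup \I^n\times\partial\I$ in which the path direction $\I$ is appended as the last coordinate. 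Applying $A$'s filling structure $\phi$ to the transposed problem and transposing back yields the Kan fibration structure $\phi^\I$ on $A^\I \to A\times A$.

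First I would spell out this transpose construction explicitly, primarily to pin down how the open direction is identified on both sides: under the chosen decomposition, the open direction of $\sqcup^n$ (coordinate $1$) corresponds to the open direction of $\sqcup^{n+1}$ (still coordinate $1$, since the path coordinate is inserted as a new last factor). The core verification is then that this transpose carries degenerate lifting data to degenerate lifting data in the matching direction, and that normal fillers for $A$ transpose back to normal fillers for $A^\I\to A\times A$.

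Concretely, suppose the lifting data against $A^\I\to A\times A$ is degenerate in the open coordinate, say $b = c'\,(1\times\pi')\,i$ and $a = p\,c'\,(1\times\pi')$ for some $c' : \I^k\times\I^{n-1} \to A^\I$, where $\pi' : \I^n \to \I^{n-1}$ drops coordinate $1$. Transposing, the induced $(n+1)$-box in $A$ takes the form $\tilde c\,(1\times\tilde\pi)\,i'$, where $\tilde c : \I^k\times \I^n \to A$ is the transpose of $c'$ under $\I^n \cong \I^{n-1}\times\I$, and $\tilde\pi = \pi'\times 1_\I : \I^{n+1}\to\I^n$ drops coordinate $1$ of $\I^{n+1}$. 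This is precisely the shape covered by the normality clause for $A$'s Kan structure, so the assigned filler equals $\tilde c\,(1\times\tilde\pi)$. Transposing back along $(-)\times\I \dashv (-)^\I$ gives the map $c'\,(1\times\pi') : \I^k\times\I^n \to A^\I$, which is a degenerate filler in the open direction, as required. The case of lower open boxes $\sqcap^n$ is handled symmetrically, and naturality in $\I^k$ descends from the underlying structure on $A$.

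The only real obstacle I anticipate is the notational bookkeeping: verifying that degeneracy in the open direction is preserved across the exponential transpose once the path direction is appended as a fresh coordinate. Once one fixes the convention that this new coordinate is inserted as the last factor, the open direction remains coordinate $1$ on both sides, and the normality condition transports unchanged; the rest of the proof reduces to applying the hypothesis that $\phi$ on $A$ is normal.
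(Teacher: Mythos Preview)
Your argument is correct and is exactly the routine verification the paper is gesturing at: its proof consists of the single word ``Straightforward.'' The intended reasoning is precisely the exponential-transpose argument underlying Propositions~\ref{prop:pathliftboxfill} and~\ref{prop:main}, together with the observation you make that appending the path coordinate as a fresh last factor leaves the open direction fixed at coordinate~$1$, so that degeneracy data transposes to degeneracy data of the same shape and normality of $A$ applies directly.
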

\begin{proof}
Straightforward.
\end{proof}

\begin{lemma}
Every normal Kan complex  $A$ has a normal connection $c : A^\I \to A^{\I\times\I}$.
\end{lemma}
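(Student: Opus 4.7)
The plan is to follow the construction of the (non-normal) connection from the preceding lemma essentially verbatim, and then observe that the normal Kan structure on $A$ forces the resulting connection to be normal.

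Unwinding, an $n$-cube $a$ of $A^\I$ corresponds under Proposition \ref{prop:pathobject} to an $(n+1)$-cube $\tilde a : \I^n\times \I \to A$ with start face $a_0 = \tilde a|_{t=0} : \I^n \to A$, and producing a connection amounts to producing, naturally in $n$, an $(n+2)$-cube $\widetilde{c(a)} : \I^n \times \I \times \I \to A$ whose faces at $t=0$, $u=0$, $u=1$ are the pullbacks of $a_0$, $a_0$, and $\tilde a$ respectively (with the face at $t=1$ left free), as dictated by the shape \eqref{diagram:connection}. These three faces assemble into a map from the open box $\I^n\times \sqcap^2 \hookrightarrow \I^n\times\I\times\I$ (open in the $t$-direction), and the uniform normal Kan structure $\phi$ on $A$ supplies a filler $\widetilde{c(a)}$. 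Transposing back gives $c(a) : \I^n \to A^{\I\times\I}$, and the uniformity (naturality in $\I^n$) clause turns this assignment into an actual morphism of cubical sets $c : A^\I \to A^{\I\times\I}$ whose boundary has the correct shape to be a connection.

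For normality, suppose $a = r(x)$ for some $n$-cube $x$ of $A$. Then $\tilde a(s,t) = x(s)$, so $a_0 = x$ and all three specified boundary faces of the open box reduce to the pullback of $x$ along the projection $\I^n \times \I \to \I^n$. Consequently the open-box datum factors as the restriction along $\I^n \times \sqcap^2 \hookrightarrow \I^n \times \I \times \I$ of the doubly degenerate cube $x \circ \mathrm{pr} : \I^n \times \I \times \I \to \I^n \to A$. In particular, this open box is degenerate \emph{in the direction $t$ in which it is open}, which is exactly the hypothesis of the normality clause in the definition of a normal uniform Kan structure. That clause forces $\phi$ to return the doubly degenerate cube $x \circ \mathrm{pr}$ itself as the filler, and this cube is precisely the transpose of $r(r(x)) : \I^n \to A^{\I\times\I}$. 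Untransposing yields $c(rx) = r(rx)$, so $c$ is normal.

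The one point to check carefully is that the normality clause constrains only those fillers of open boxes that are degenerate \emph{in the open direction}; here one is fortunate because whenever $a$ is itself fully degenerate the assembled open box is degenerate in every direction of $\I\times\I$, so the constraint applies regardless of which face of the $2$-cube we single out as missing.
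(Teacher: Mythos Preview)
Your argument is correct and is exactly the unfolding the paper intends by its one-line proof ``Using normal box filling in $A$ to define the connection results in a normal connection.''  One small notational slip: with faces at $t=0$, $u=0$, $u=1$ specified, the missing face is $t=1$, so in the paper's conventions the open box is $\sqcup^2$ rather than $\sqcap^2$; and your final paragraph is harmless but unnecessary, since once the open direction is fixed as $t$ the only thing that matters is degeneracy in $t$, which you have already verified directly.
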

\begin{proof}
Using normal box filling in $A$ to define the connection results in a normal connection.
\end{proof}

\begin{lemma}
If $f: Y\to X$ is a normal Kan fibration, then the transport operation along any degenerate path $r(x) : x\to x$ in $X$ is trivial.
\end{lemma}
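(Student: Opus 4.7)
The plan is to unwind the definition of transport, recognize it as the base case of uniform box filling, and then directly apply the normality axiom in the lowest dimension.

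Fix $x : 1 \to X$ and $b_0 : 1 \to Y$ with $f \circ b_0 = x$, and write $\pi : \I \to 1$ for the unique projection, so that the degenerate path at $x$ is $r(x) = x \circ \pi$. By the path-lifting description of transport, $r(x)_*(b_0)$ is computed as $\tilde{p}(1)$, where $\tilde{p}$ solves the lifting problem
\[
\xymatrix@=3em{
1 \ar[d]_{i} \ar[r]^{b_0} & Y \ar[d]^{f} \\
\I \ar[r]_{r(x)} \ar@{..>}[ru]_{\tilde{p}} & X
}
\]
with $i : 1 \hookrightarrow \I$ the endpoint inclusion. Since $\sqcup^1 \cong 1$, this square is precisely the instance $n = 1$, $k = 0$ of uniform box filling, so the canonical lift is $\tilde{p} = \phi(r(x), b_0)$.

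I would then apply the normality axiom with $k = 0$, $n = 0$, and the $0$-cube $c := b_0 : 1 \to Y$. The projection in the open direction is $\pi : \I \to \I^0 = 1$; since $i$ is a section of $\pi$, one has $\pi \circ i = \mathrm{id}_1$. The normality condition $\phi(f c \pi,\ c \pi i) = c \pi$ then simplifies to $\phi(r(x), b_0) = r(b_0)$, so the chosen lift is the constant path at $b_0$, and therefore $r(x)_*(b_0) = \tilde{p}(1) = b_0$.

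The main obstacle is purely notational: one must carefully match the filling data $(a, b) = (r(x), b_0)$ to the normality axiom's pattern $(f c \pi,\ c \pi i)$ with the correct choices of $c$, $\pi$, and $i$, and verify the identity $\pi \circ i = \mathrm{id}_1$ at the bottom dimension. Once this identification is made, the result is an immediate consequence of the normality condition on $\phi$.
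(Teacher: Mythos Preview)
Your proof is correct and is precisely the unwinding of the paper's one-line argument, which reads in full: ``Transport is determined by $1$-box filling, and so it is trivial if $f$ is normal.'' You have simply spelled out what this means by matching the transport lifting problem to the $k=0$, $n+1=1$ instance of the normality axiom, which is exactly the intended content.
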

\begin{proof}
Transport is determined by $1$-box filling, and so it is trivial if $f$ is normal.
\end{proof}

Using normal fibrations now allows a strengthening of Proposition \ref{prop:liftingnotnormal}.

\begin{proposition}\label{prop:liftingnormal}
Let $A$ be a normal Kan complex, $\pi : B \to A^\I$ a normal Kan fibration, and $b : A\to B$ a map making the following outer square commute.
\begin{equation}\label{diagram:IdElim}
\xymatrix{
A \ar[d]_{r} \ar[r]^{b} & \ar[d]^{\pi} B \\
A^\I \ar@{..>}[ru]_{j} \ar[r]_{=} & A^\I
}
\end{equation}
Then there is a diagonal filler $j$ as indicated, making both triangles commute, $\pi\circ j = 1$ and $jr=b$.
\end{proposition}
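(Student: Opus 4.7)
The plan is to reuse the construction of the diagonal $j$ from Proposition \ref{prop:liftingnotnormal} essentially verbatim, and then exploit the two additional normality hypotheses to verify that the upper triangle $jr = b$ also commutes. Concretely, I would begin by invoking the lemma that a normal Kan complex admits a normal connection $c : A^\I \to A^{\I \times \I}$, and use it together with the normal Kan fibration $\pi$ to define, for each $n$-cube $a : a_0 \to a_1$ in $A^\I$,
\[
j(a) \;:=\; c(a)_*(b(a_0)),
\]
where transport is taken in $\pi : B \to A^\I$. Since the outer square commutes, $\pi(b(a_0)) = r(a_0)$, so $b(a_0)$ is a valid lift of the starting vertex $r(a_0)$ of $c(a)$, and $c(a)_*(b(a_0))$ is a lift of $a$. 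Naturality of the connection together with uniformity of transport makes $j : A^\I \to B$ a map of cubical sets with $\pi j = 1_{A^\I}$, exactly as in Proposition \ref{prop:liftingnotnormal}.

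The new content is the verification of $jr = b$. Fix an $n$-cube $x$ of $A$ and compute
\[
j(r(x)) \;=\; c(r(x))_*(b(x)).
\]
Since $c$ is normal, $c(r(x)) = r(r(x))$, so the transport takes place along the degenerate path $r(r(x))$ in $A^\I$. Since $\pi$ is a normal Kan fibration, the lemma identifying transport along a degenerate path with the identity yields $r(r(x))_*(b(x)) = b(x)$. Hence $j(r(x)) = b(x)$, i.e.\ $jr = b$, giving both commutativities.

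The main obstacle to this otherwise routine assembly is checking that normality really propagates through the two dimension-shifting manipulations involved: first in forming the normal connection on $A^\I$ (using the normal Kan structure on $A$, together with the fact that the shift-by-one description of $A^\I$ behaves well with respect to degeneracies), and second in recognising $c(r(x))$ as a genuinely degenerate cube in $A^\I$ so that the transport-triviality lemma for normal fibrations applies at the correct coordinate. Both of these points are already prepared by the preceding normality lemmas, so once they are invoked in the right places, the equality $jr = b$ follows from the computation above.
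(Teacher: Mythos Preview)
Your proposal is correct and follows the paper's own proof essentially verbatim: define $j(a) := c(a)_*(b(a_0))$ as in Proposition~\ref{prop:liftingnotnormal}, then compute $j(r(x)) = c(r(x))_*(b(x)) = r(r(x))_*(b(x)) = b(x)$, using normality of the connection for the first equality and triviality of transport along degenerate paths in a normal fibration for the second. The paper's proof is exactly this three-line calculation, invoking the same two preceding lemmas you identify.
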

\begin{proof}
Again we set $j(a) := c(a)_*(b(a_0))$ and it just remains to check that $jr=b$.  But now we have:
\[
jr(x) = c(r(x))_*(b(x)) = r(r(x))_*(b(x)) = b(x)
\]
where the second equation is because the connection in $A$ is normal, and the third because the transport in $B\to\pA$ along $r(r(x))$ is trivial.
\end{proof}

\begin{remark}
On reflection, it should come as no surprise that the fibrations in our model must be normal in this sense in order to also model the \Id-computation rule: the syntactic model has the closely related property that the transport operation along any reflexivity term is (definitionally) trivial, in virtue of the \Id-computation rule.

Of course, to model \Id-types in general we require not only the factorizations of the form $A \to A^\I \to A\times A$ for Kan complexes $A$, but also all those of the form $A \to A^\I \to A\times_X A$, for Kan fibrations $A\to X$, at least for $X$ Kan, and where the exponential $A^\I$ is made in the slice category $\cSet/X$.  As elsewhere, we consider only the case $X=1$ for ease of exposition, but the general case holds as well.
\end{remark}

\section{Normalization}

Let $X$ be a Kan complex with uniform box-filling structure $\phi$, so that for any open box $b: \sqcup^n \to X$ there is an associated filler $\phi(b) : \I^n \to X$ extending $b$,
\[
\xymatrix{
\sqcup^n \ar[d]_{i^n} \ar[rr]^-{b} && X\\
\I^n \ar@{..>}[urr]_{\phi(b)} &&\,.
}
\]
Since we have symmetries of cubes, we may restrict attention to boxes $i : \sqcup^n \to \I^n$ that are open in the first dimension (in either direction).  The filling structure is then normal if, for any $(n-1)$-cube $c : \I^{n-1}\to X$, we have
\[
\phi(c\pi i) = c\pi\,,
\]
as in
\[
\xymatrix{
\sqcup^n \ar[dd]_{i^n} \ar[rr]^-{c\pi i} && X\\
	& \I^{n-1} \ar[ur]^c & \\
\I^n  \ar[ru]^{\pi} \ar@/{}_{2pc}/@{..>}[rruu]_{\phi(c\pi i)} &&\,,
}
\]
where $\pi : \I^n \to \I^{n-1}$ is the first projection, so that   $\pi i : \sqcup^n \to \I^{n-1}$ projects the open box onto its ``bottom''.  
Of course, the same condition applies also to generalized open boxes $b: \I^k\times\sqcup^n \to X$.

This condition can be reformulated equivalently by saying that the section $\phi$ in the diagram
\[
\xymatrix{
& X^{ \I^n } \ar[d]^{X^{i}}\\
X^{\sqcup^n} \ar[ru]^{\phi} \ar[r]_= & X^{\sqcup^n} 
}
\]
must also make the upper triangle in the following diagram commute:
\[
\xymatrix{
X^{\I^{n-1}} \ar[d]_{X^{\pi i}} \ar[r]^{X^\pi} & X^{ \I^n } \ar[d]^{X^{i}}\\
X^{\sqcup^n} \ar[ru]^{\phi} \ar[r]_= & X^{\sqcup^n} \,.
}
\]
Transposing, we obtain the condition:
\[
\xymatrix{
{X^{\I^{n-1}}\!\times\I^{n}\ } 	\ar[d]_-{X^{{\pi}i} \times 1\ } \ar[rr]^-{1\times\pi} && {X^{\I^{n-1}}\!\times\I^{n-1}\ } \ar[d]^{\eval} \\ 
X^{\sqcup^n}\times\I^{n}		\ar[rr]^-{\phi} 	&& X \\
X^{\sqcup^n}\times\sqcup^n 	\ar[u]^-{1\times i\ } 	\ar[rr]_-{\eval} && X \ar[u]_{=} \,.
}
\]
Or, equivalently,
\[
\xymatrix{
{X^{\I^{n-1}}\!\times\I^{n}\ } 	\ar[d]_-{X^{{\pi}i} \times 1\ } \ar[rr]^-{\eval(1\times\pi)} && X \ar[d]^{=} \\ 
X^{\sqcup^n}\times\I^{n}		\ar[rr]^-{\phi} 	&& X \\
X^{\sqcup^n}\times\sqcup^n  	\ar[u]^-{1\times i\ } \ar[rr]_-{\eval} && X \ar[u]_= \,.
}
\]
Using a coproduct to ``fold'' along $\phi$, we obtain:
\[
\xymatrix{
(X^{\sqcup^n}\times\sqcup^n) + (X^{\I^{n-1}}\!\times\I^{n}) \ar[dd]_-{[1\times i,\,X^{{\pi}i} \times 1]\ } \ar[rrr]^-{[\eval,\,\eval(1\times\pi)]} 
									&&& X \\ 
									&&&\\
X^{\sqcup^n}\times\I^{n}		\ar[rrruu]_-{\phi} 	&&& 
}
\]
Pushing out as in \eqref{diagram:defTn}, we have:
\begin{equation}\label{diag:defTdotn}
\xymatrix{
(X^{\sqcup^n}\times\sqcup^n) + (X^{\I^{n-1}}\!\times\I^{n}) \ar[dd]_-{[1\times i,\,X^{{\pi}i} \times 1]\ } \ar[rr]^-{[\eval,\,\eval(1\times\pi)]} 
									&& X \ar[dd]^{\dot{t}^n_X} \ar[r]^{=} & X\\
									&&&\\
X^{\sqcup^n}\times\I^{n}	\ar[rr] && {\pocorner} \dot{T}^n(X) \ar@{..>}[uur]_{\phi} & \,.
}
\end{equation}
This last description provides an object $\dot{T}^n(X)$ with a map $\dot{t}^n_X : X\to\ \dot{T}^n(X)$ such that retractions $\phi$ of $\dot{t}^n_X$ correspond uniquely to \emph{normal} $n$-box filling structures on $X$.  

Finally, as in \eqref{diagram:defT}, we can sum over all $n$ to obtain a pointed endofunctor $\dot{T} :\cSet\to\cSet$, the algebras for which correspond to \emph{normal} Kan complexes.

\begin{theorem}\label{thm:AWFS2}
There is an algebraic weak factorization system $(\LL,\RR)$ on $\cSet$ for which the $\RR$-algebras are exactly the normal Kan fibrations. \end{theorem}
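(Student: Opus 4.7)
The plan is to follow the proof of Theorem \ref{thm:AWFS1} almost verbatim, replacing the pointed endofunctor $T$ assembled from the pushouts \eqref{diagram:defTn} with the pointed endofunctor $\dot{T}$ assembled from the refined pushouts \eqref{diag:defTdotn}. By construction of \eqref{diag:defTdotn}, a retraction $\phi$ of $\dot{t}^{\,n}_X$ is exactly a normal $n$-box filler for the opening direction fixed by $\pi$; using the symmetries of the cubes to cover the remaining opening directions (and both upper and lower boxes) and summing the resulting pushouts over all $n$, exactly as in \eqref{diagram:defT}, yields a pointed endofunctor $(\dot{T},\dot{t})$ on $\cSet$ whose algebras are precisely the normal uniform Kan complexes. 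The map version, needed for the statement about normal Kan \emph{fibrations}, is obtained by running the same construction naturally in a base object $X$, replacing the evaluation maps in \eqref{diag:defTdotn} by their fibered analogues for a map $f:Y\to X$; this gives a pointed endofunctor on $\cSet^{\to}$ whose algebras are the normal uniform Kan fibrations.

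The crucial remaining step is the analogue of Lemma \ref{lem:compact}: that $\dot{T}$ preserves $\omega$-colimits. The only new ingredient in \eqref{diag:defTdotn} compared to \eqref{diagram:defT} is the summand $X^{\I^{n-1}}\times\I^{n}$ together with the map $X^{\pi i}\times 1$. But each functor $X\mapsto X^{\I^{n-1}}$ is an iterate of the pathobject functor, and hence preserves all colimits by Corollary \ref{cor:rightadjpath}, while multiplication by a fixed cube $\I^{n}$ preserves colimits since $\cSet$ is a topos. Thus both the new summand and its structure map preserve $\omega$-colimits, and $\dot{T}$, being a coproduct-indexed pushout built from functors of this type together with the functors already treated in Lemma \ref{lem:compact}, again preserves $\omega$-colimits.

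With compactness in hand, Kelly's transfinite small-object argument \cite{Kelly}, as used in the proof of Theorem \ref{thm:AWFS1}, applies verbatim to produce the algebraically free monad $(\dot{T}_\infty, \dot{t}_\infty, \mu)$ on $(\dot{T},\dot{t})$, with the iterated coequalizer construction converging at $\omega$ and inducing a canonical isomorphism between $\dot{T}_\infty$-algebras and $\dot{T}$-algebras. The desired AWFS $(\LL,\RR)$ is then read off exactly as in Remark \ref{rem:AWFS1}: for any $f:Y\to X$ the factorization is $Y\xrightarrow{(\dot{t}_\infty)_f} E(f)\xrightarrow{\dot{T}_\infty(f)} X$, with $\dot{T}_\infty(f)$ carrying a canonical normal uniform Kan fibration structure and $(\dot{t}_\infty)_f$ carrying the $\LL$-comonad structure supplied by Garner's construction. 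The only genuine obstacle is the compactness lemma for $\dot{T}$; every other component of the argument, including the equivalence between algebras for the pointed endofunctor and algebras for the induced monad, and the extraction of the AWFS, transfers mechanically from the non-normal case.
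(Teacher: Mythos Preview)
Your proposal is correct and follows essentially the same approach as the paper's own (sketch) proof: construct the pointed endofunctor $\dot{T}$ from the pushouts \eqref{diag:defTdotn}, verify that it preserves $\omega$-colimits by inspecting the new summand $X^{\I^{n-1}}\times\I^n$ (which the paper leaves implicit under ``inspecting the components''), and then apply Kelly's construction exactly as for Theorem~\ref{thm:AWFS1}. Your write-up is in fact a faithful and slightly more detailed unpacking of the paper's sketch, with no meaningful divergence in strategy.
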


\begin{proof}(sketch)
We have already constructed a pointed endofunctor $$\dot{T} :\cSet\to\cSet,$$ the algebras for which are the normal Kan complexes $X\to 1$.   The general construction for normal Kan fibrations $f:X\to Y$ is entirely analogous.  It just remains to show that $\dot{T}$ preserves $\omega$-colimits, so that Kelly's construction of the algebraically free monad $\dot{T}\to \dot{T}_\infty$ again applies. But this follows just as in Lemma \ref{lem:compact}, by inspecting the components in the defining diagram \eqref{diag:defTdotn}.
\end{proof}

\begin{proposition}
For any normal Kan complex $A$, the canonical pathobject factorization $A\to A^\I\to A\times A$ is an $(\LL,\RR)$ factorization.
\end{proposition}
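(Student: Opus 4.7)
The plan is to verify the two halves of the factorization independently. The right-hand factor $A^\I \to A\times A$ should be an $\RR$-map, and the left-hand reflexivity map $r : A \to A^\I$ should be an $\LL$-map. Since by Theorem \ref{thm:AWFS2} the $\RR$-algebras are exactly the normal Kan fibrations (and $\RR$ contains these), the right side is handled directly by Lemma \ref{lem:normalpath}: under the assumption that $A$ is a normal Kan complex, the canonical projection $A^\I \to A\times A$ is a normal Kan fibration, hence an $\RR$-algebra, hence in $\RR$.

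The substantive step is to place $r$ in $\LL$. My plan is to exhibit $r$, in the arrow category $A/\cSet$, as a retract of the canonical $L$-coalgebra $\LL(r): A \to E(r)$ produced by the AWFS factorization $r = \RR(r)\circ \LL(r)$. Concretely, this amounts to constructing a map $j: A^\I \to E(r)$ satisfying both $\RR(r)\circ j = 1_{A^\I}$ and $j\circ r = \LL(r)$, i.e., a diagonal filler making \emph{both} triangles commute in
\[
\xymatrix@=3em{
A \ar[d]_{r} \ar[r]^{\LL(r)} & \ar[d]^{\RR(r)} E(r) \\
A^\I \ar[r]_{=} \ar@{..>}[ru]_{j} & A^\I\,.
}
\]

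Such a $j$ will be supplied by Proposition \ref{prop:liftingnormal}: by assumption $A$ is a normal Kan complex, and $\RR(r): E(r)\to A^\I$ is an $\RR$-algebra, hence a normal Kan fibration by Theorem \ref{thm:AWFS2}. The proposition therefore produces the required $j$ with $\RR(r) j = 1$ and $jr = \LL(r)$; these two equations exhibit $r$ as a retract of $\LL(r)$ in $A/\cSet$, so $r \in \LL$. Combined with the first paragraph, this establishes that $A \to A^\I \to A\times A$ is an $(\LL,\RR)$-factorization.

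The main content of the argument therefore lies not in the present proposition but upstream in Proposition \ref{prop:liftingnormal} and the switch to \emph{normal} fibrations in Theorem \ref{thm:AWFS2}. Normality is precisely what ensures that the upper triangle $jr = \LL(r)$ commutes — the condition corresponding to the $\Id$-computation rule. Without normality one would get only the lower triangle (Proposition \ref{prop:liftingnotnormal}), which yields a plain WFS factorization but not the coalgebra structure required to coherently model $\Id$-types.
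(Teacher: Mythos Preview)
Your proof is correct and follows essentially the same route as the paper: Lemma~\ref{lem:normalpath} handles the right factor, and Proposition~\ref{prop:liftingnormal} supplies the diagonal filler that makes $r$ an $\LL$-map. Your framing in terms of exhibiting $r$ as a retract of the free coalgebra $\LL(r)$ is just a slightly more explicit unpacking of the paper's remark (preceding diagram~\eqref{diagram:reflL}) that such a filler is precisely an $\LL$-coalgebra structure on $r$.
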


\begin{proof}
The second factor $A^\I\to A\times A$ is normal by Lemma \ref{lem:normalpath}. The first factor $r:A\to A^\I$ is an $\LL$-map iff there is a diagonal filler for the square in \eqref{diagram:reflL}.  This is provided by Proposition \ref{diagram:IdElim}.
\end{proof}

With this, we have reached our goal: 
\begin{corollary}
The category $\cSet$ of cartesian cubical sets admits an interpretation of type theory with \Id-types based on an algebraic weak factorization system, in which the \Id-types are taken to be path-objects $\id{A} = A^\I$ satisfying the standard \Id-elimination and computation rules.  
\end{corollary}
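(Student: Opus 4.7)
The plan is to assemble the pieces already constructed. By the Proposition at the close of Section 1, to produce a model of type theory with $\Id$-types it suffices to exhibit three things: a category with finite limits, an algebraic weak factorization system on it, and a stable choice of $(\LL,\RR)$-factorizations for all diagonal maps.

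First, $\cSet$ is a presheaf topos, hence finitely complete (indeed, locally cartesian closed). Theorem \ref{thm:AWFS2} provides the AWFS whose $\RR$-algebras are the normal Kan fibrations. It remains to give the stable diagonal factorizations. For each Kan fibration $A\to\Gamma$ I would take the canonical path-object factorization
\[
A \;\longrightarrow\; A^\I \;\longrightarrow\; A\times_\Gamma A
\]
induced by the interval $\I$, where $A^\I$ is the relative exponential in $\cSet/\Gamma$. Stability under reindexing holds because $(-)^\I$ in each slice is a right adjoint, so $f^*(A^\I) \cong (f^*A)^\I$ canonically for any $f:\Delta\to\Gamma$; equivalently, path objects are formed fibrewise. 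That this factorization is genuinely an $(\LL,\RR)$-factorization is the content of the Proposition immediately preceding the Corollary: the second factor is a normal Kan fibration by (the slicewise version of) Lemma \ref{lem:normalpath}, and the first factor $r$ is an $\LL$-map by the diagonal filler supplied by Proposition \ref{prop:liftingnormal}. The passage from the absolute case $\Gamma=1$ to the slice case is routine, and was explicitly flagged in the closing Remark of Section 3.

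Next, I would read off the interpretation of the rules from the AWFS: $\Id$-formation is modelled by the $\RR$-map $A^\I \to A\times_\Gamma A$; the constructor $\refl$ by the $\LL$-map $r:A\to A^\I$; the eliminator $\mathsf{J}_b$ by the algebraic diagonal filler built from the $\LL$-coalgebra structure of $r$ together with the $\RR$-algebra structure on the dependent type being eliminated into, exactly as in Remark \ref{AWFSisWFS}; and the $\Id$-computation rule $\mathsf{J}_b(x,x,\refl(x)) = b(x)$ by the upper-triangle equation $jr=b$ from Proposition \ref{prop:liftingnormal}.

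The hard part is arguably already behind us: the delicate point, namely making the $\Id$-computation rule hold as a \emph{definitional} equation, is precisely what forced the refinement from uniform to \emph{normal} Kan fibrations in Section 4, via strict normal connections and trivial transport along degenerate paths. The only remaining standard coherence issue is the strictification of substitution into dependent types, which is orthogonal to the homotopical content and is handled by the Lumsdaine--Warren construction \cite{LW} cited in Section 1.2.
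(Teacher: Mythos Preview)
Your proposal is correct and follows the paper's approach: the paper gives no separate proof of the corollary, simply prefacing it with ``With this, we have reached our goal,'' treating it as immediate from Theorem~\ref{thm:AWFS2} together with the preceding Proposition on the path-object factorization. You have accurately identified and assembled exactly those ingredients---the AWFS of normal Kan fibrations, the $(\LL,\RR)$-status of $A\to A^\I\to A\times A$, stability via the interval, and the slicewise generalization flagged in the closing Remark of Section~3---making explicit what the paper leaves to the reader.
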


\section{An application: Factorization}

The factorization of an arbitrary map $f:X\to Y$ given by the $\dot{T}_\infty$ monad is not easy to describe directly.  But in case $X$ and $Y$ are both Kan, there is another factorization that can more easily be described explicitly: the ``graph'' or ``homotopy image'' factorization. This is a well-known construction from homotopy theory, but the proof that it works in this case is not trivial.  We can use the interpretation of type theory to give an alternative proof (or at least a different perspective on the usual proof).

Let $A$ and $B$ be (normal, uniform) Kan complexes, and take any map $f:A\to B$.  We will construct a factorization 
\[
\xymatrix@=3em{
A \ar[r]^{i} \ar[rd]_{f}  & \ar[d]^{\widetilde{f}} \widetilde{A} \\
& B
}
\]
with $i\in\L$ and $\widetilde{f}\in\R$.

Consider the following pullback diagram:
\[
\xymatrix@=3em{
Pf \ar[r] \ar[d] \pbcorner  \ar@/{}_{2pc}/@{..>}[dd]_{\widetilde{f}} & B^\I \ar[d] \\
A\times B \ar[r]_{f\times 1} \ar[d]^{\pi_2} & B\times B \\
B 
}
\]
Take $\widetilde{A} = Pf$ and $\widetilde{f}$ the indicated composite, which, note, is an $\R$-map, since each factor is, by the assumption that both $A$ and $B$ are Kan.

For the factorization $i: A \to \widetilde{A}$, take the unique map determined as in:
\[
\xymatrix@=3em{
A \ar@{..>}[rd]^{i} \ar[r]^{f} \ar@/{}_{2pc}/[rddd]_{f} \ar@/{}_{1pc}/[rdd]^{(1,f)}  & B \ar[rd]^{r_B} & \\
		& {Pf} \ar[r] \ar[d] \pbcorner & B^\I \ar[d] \\
		& A\times B \ar[r]_{f\times 1} \ar[d]^{\pi_2} & B\times B \\
		& B 
		}
\]

As a dependent type, $Pf \to B$ is constructed as follows:
$$
\begin{prooftree}
\[ x: A \vdash f(x) : B, \qquad  y,y' : B \vdash \id{B}(y, y')
\justifies
x: A, y: B \vdash \id{B}(fx, y)
\]
\justifies
y: B \vdash \sum_{x:A}\id{B}(fx, y)
\end{prooftree}
$$
Under propositions-as-types, this is the ``image'' $$\{y:B\ |\ \exists x:A.\, f(x) = y\} \to B.$$
In the factorization 
\[
\xymatrix@=3em{
A \ar[r]^{i} \ar[rd]_{f}  & \ar[d]^{\widetilde{f}} Pf \\
& B
}
\]
the term $i$ has the form
\begin{align*}
x: A \vdash i(x) &: Pf(fx),\\
			&= \sum_{x:A}\id{B}(fx, fx)\,.
\end{align*}
Indeed, it is given by setting
\[
i(x) := (x, \refl(fx))\,.
\]

We know that $\widetilde{f} : Pf \to B$ is a (normal, uniform) Kan fibration, because it is a composite of pullbacks of such, as already noted; but we need to see that $i : A\to Pf$ as just defined is an $\L$-map. Consider 
\[
Pf = \sum_{x:A}\sum_{y:B}\id{B}(fx, y)
\] 
and take any $C\to Pf$ in $\R$, i.e.:
\[
x: A, y:B, z:\id{B}(fx,y) \vdash C(x,y,z)\,.
\]
Consider the lifting problem
\[
\xymatrix@=3em{
A \ar[r]^{d} \ar[d]_{i} & C \ar[d] \\
Pf \ar[r]_{=} \ar@{..>}[ru]_{\phi} & Pf\,.
}
\]
Thus we are given
\[
x: A \vdash d(x) : C\big(x,fx,\refl(fx)\big)\,,
\]
and we seek
\[
x: A, y:B, z:\id{B}(fx,y) \vdash \phi(x,y,z) : C(x,y,z)\,.
\]
such that:
\[
\phi(x,fx,\refl(fx)) = d(x)\,.
\]
But this is a known inference, for which see \cite{GG}, Lemma 11.

There is, of course, also a purely algebraic proof of this factorization; cf.~\cite{GvdB}, Proposition 6.1.4.

\subsection*{Acknowledgements}
For support during my stay in Stockholm, I am grateful to the University of Stockholm, the Arrhenius Foundation, and  the US Air Force Office of Sponsred Research through MURI Grant FA9550-15-1-0053.



\begin{thebibliography}{300}

\bibitem{AW}
S.~Awodey and M.A.~Warren. Homotopy theoretic models of identity types, arXiv:0709.0248.
%
\bibitem{awodey:cubes}
S.~Awodey. Notes on cubical models of type theory (2015), \url{www.github.com/awodey/math/blob/master/Cubical/cubical.pdf}.
%
\bibitem{GvdB}
B.~van den Berg and R.~Garner. Topological and Simplicial Models of Identity Types.  arXiv:1007.4638v2.
%
\bibitem{BCH}
M.~Bezem, T.~Coquand, S.~Huber. A model of type theory in cubical sets, \url{www.cse.chalmers.se/~coquand/mod1.pdf}.
%
\bibitem{GG}
N.~Gambino and R.~Garner. The Identity Type Weak Factorization System, arXiv: 0803.4349v2.
%
\bibitem{GS}
N.~Gambino and C.~Sattler. Uniform Fibrations and the Frobenius Condition, arXiv:1510.00669v3.
%
\bibitem{Garner}
R.~Garner. Understanding the small object argument, arXiv:0712.0724.
%
\bibitem{Kelly}
G.M.~Kelly. A unified treatment of transfinite constructions for free algebras, free monoids, colimits, associated sheaves, and so on. Bull.\ Austrail.\ Math.\ Soc.\ 22 (1981), pp.~1--83.%

\bibitem{LW}  P.~Lumsdaine and M.~Warren.  The local universes model: an overlooked coherence construction for dependent type theories, arXiv:1411.1736v2.
\end{thebibliography}
\end{document}